\newlength\longest
\newcommand{\cM}{{\mathcal M}}
\newcommand{\N}{{\mathbb N}}
\newcommand{\R}{{\mathbb R}}
\newcommand{\cG}{{\mathcal G}}
\begin{document}

\newtheorem{theorem}{Theorem}[section]
\newtheorem{lemma}[theorem]{Lemma}
\newtheorem{corollary}[theorem]{Corollary}
\newtheorem{proposition}[theorem]{Proposition}
\newtheorem{conjecture}[theorem]{Conjecture}
\newtheorem{problem}[theorem]{Problem}
\newtheorem{claim}[theorem]{Claim}
\theoremstyle{definition}
\newtheorem{assumption}[theorem]{Assumption}
\newtheorem{remark}[theorem]{Remark}
\newtheorem{definition}[theorem]{Definition}
\newtheorem{example}[theorem]{Example}
\theoremstyle{remark}
\newtheorem{notation}{Notasi}
\renewcommand{\thenotation}{}

\title{Generalized H\"{o}lder's Inequality in Morrey Spaces}
\author{Ifronika${}^{1}$, Mochammad Idris${}^2$\footnote{\emph{Permanent Address}:
		Department of Mathematics, Universitas Lambung Mangkurat,
		Banjarbaru 70714, Indonesia}, Al Azhary Masta${}^3$\footnote{\emph{Permanent
        Address}: Department of Mathematics Education, Universitas Pendidikan Indonesia,
		Bandung 40154, Indonesia}, and Hendra Gunawan${}^{4}$}
\affil{
Analysis and Geometry Group,\\Faculty of Mathematics and Natural Sciences,\\
Bandung Institute of Technology,\\Bandung 40132, INDONESIA\\
\bigskip
E-mail addresses: ${}^{1}$ifronika@math.itb.ac.id, ${}^{2}$idemath@gmail.com,
${}^{3}$alazhari.masta@upi.edu, ${}^{4}$hgunawan@math.itb.ac.id}

\date{}

\maketitle

\begin{abstract}
 The aim of this paper is to present sufficient and necessary conditions for generalized
 H\"{o}lder's inequality in generalized Morrey spaces. We also obtain similar results in
 weak Morrey spaces and in generalized weak Morrey spaces. The sufficient and necessary
 conditions for the generalized  H\"{o}lder's inequality in these spaces are obtained
 through estimates for characteristic functions of balls in $\R^d$.

\bigskip

\noindent{\bf Keywords}: H\"{o}lder's inequality, generalized H\"{o}lder's inequality,
Morrey spaces, weak Morrey spaces, generalized Morrey spaces, generalized weak Morrey
spaces.

\medskip

\noindent{\textbf{MSC 2010}}: Primary  26D15; Secondary 46B25, 46E30.
\end{abstract}

\section{Introduction and Preliminaries}

Several authors have made important observations about H\"{o}lder's inequality
in the last three decades (see \cite{Avram, Cheung, Matkowski, Vasyunin}). Recently,
Masta \textit{et al.} \cite{Masta2} obtained sufficient and necessary conditions
for the generalized H\"{o}lder's inequality in Lebesgue spaces. In this paper,
we are interested in studying the generalized H\"{o}lder's inequality in Morrey
spaces and in generalized Morrey spaces. In particular, we shall prove sufficient
and necessary conditions for generalized H\"{o}lder's inequality in those spaces.
In addition, we also prove similar result in weak Morrey spaces and in generalized
weak Morrey spaces.

Let us first recall the definition of Morrey spaces. For $1 \le p \le q < \infty$,
the \textit{Morrey space} $ \cM^p_q(\R^d)$ is the set of all $p$-locally integrable
functions $f$ on $\R^d$ such that
\[
\|f\|_{\cM^p_q}:=\sup_{a\in\R^d, r>0}\ |B(a,r)|^{\frac{1}{q} - \frac{1}{p}}
\Bigl( \int_{B(a,r)} |f(y)|^p \ dy\Bigr)^{\frac{1}{p}} < \infty.
\]
Here, $B(a,r)$ denotes the open ball in $\mathbb{R}^d$ centered at $a$ with radius
$r>0$, and $|B(a,r)|$ denotes its Lebesgue measure. One might observe that
$\|\cdot\|_{\cM^p_q}$ defines a norm on $\cM^p_q(\R^d)$, and makes the space complete
\cite{SST}. Also note that if $q = p$, then $\cM^p_q(\R^d) = L^p(\R^d)$. Thus,
$\cM^p_q(\R^d)$ can be viewed as a generalization of the Lebesgue space $L^p(\R^d)$.

The following theorem presents sufficient and necessary conditions for H\"{o}lder's
inequality in Morrey spaces.

\bigskip

\begin{theorem}\label{theorem:1.1}
Let $1\leq p \leq q<\infty$, $1\leq p_1\leq q_1<\infty$, and $1\leq p_2\leq q_2<\infty$.
Then the following statements are equivalent:

{\parindent=0cm
{\rm (1)} $\frac{1}{p_1}+\frac{1}{p_2}\leq \frac{1}{p}$ and $\frac{1}{q_1}+\frac{1}{q_2}=
\frac{1}{q}$.
			
{\rm (2)} $\| f g \|_{\mathcal{M}^{p}_{q}} \leq \| f\|_{\mathcal{M}^{p_1}_{q_1}}
\|g\|_{\mathcal{M}^{p_2}_{q_2}}$ for every $f \in \mathcal{M}^{p_1}_{q_1}(\mathbb{R}^d)$
and $g \in \mathcal{M}^{p_2}_{ q_2}(\mathbb{R}^d)$.
\par}
\end{theorem}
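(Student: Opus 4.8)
The plan is to prove the equivalence in the two directions $(1)\Rightarrow(2)$ and $(2)\Rightarrow(1)$. The engine behind both directions is the exact value of the Morrey norm of a characteristic function of a ball, namely $\|\chi_{B(a,r)}\|_{\mathcal{M}^{s}_{t}}=|B(a,r)|^{1/t}$ for $1\le s\le t<\infty$; I would first establish this by a short direct computation, checking that the supremum defining the norm is attained on $B(a,r)$ itself (the factor $|B(b,\rho)|^{1/t-1/s}$ is nonincreasing in $\rho$, so enlarging the test ball beyond $B(a,r)$ only decreases the quantity, while shrinking it cannot beat the full ball). Note this norm depends only on the second exponent $t$, a fact that shapes the whole necessity argument.

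For the sufficiency $(1)\Rightarrow(2)$, I would fix a ball $B=B(a,r)$ and estimate $\int_B|fg|^p$ by H\"older's inequality applied to the three factors $|f|^p$, $|g|^p$, and the constant $1$, with exponents $p_1/p$, $p_2/p$, and a third exponent $\alpha_3$ determined by $\tfrac{p}{p_1}+\tfrac{p}{p_2}+\tfrac1{\alpha_3}=1$. This is exactly where the hypothesis $\tfrac1{p_1}+\tfrac1{p_2}\le\tfrac1p$ enters: it guarantees $\tfrac1{\alpha_3}\ge0$, so the constant factor legitimately absorbs the slack (when equality holds, $\alpha_3=\infty$ and the third term drops out). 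Bounding the resulting local integrals of $|f|^{p_1}$ and $|g|^{p_2}$ by the respective Morrey norms produces a power $|B|^{\beta}$ with $\beta=\bigl(\tfrac1{p_1}-\tfrac1{q_1}\bigr)+\bigl(\tfrac1{p_2}-\tfrac1{q_2}\bigr)+\bigl(\tfrac1p-\tfrac1{p_1}-\tfrac1{p_2}\bigr)$; the hypothesis $\tfrac1{q_1}+\tfrac1{q_2}=\tfrac1q$ makes this collapse to $\beta=\tfrac1p-\tfrac1q$. Multiplying by $|B|^{1/q-1/p}$ and taking the supremum over all balls yields $(2)$ with constant $1$.

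For the necessity $(2)\Rightarrow(1)$, I would first recover the $q$-equation by testing $(2)$ with $f=g=\chi_{B(a,r)}$. Since $fg=\chi_{B(a,r)}$, the characteristic-function estimate turns $(2)$ into $|B(a,r)|^{1/q}\le|B(a,r)|^{1/q_1+1/q_2}$, valid for every $r>0$; letting $r\to\infty$ and $r\to0$ forces $\tfrac1q=\tfrac1{q_1}+\tfrac1{q_2}$.

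The main obstacle is the remaining inequality $\tfrac1{p_1}+\tfrac1{p_2}\le\tfrac1p$, because a single characteristic function cannot detect the first exponent at all (its Morrey norm depends only on the second exponent). To expose $p$ I would test $(2)$ with a pair of two-level step functions, $f=\chi_{B(0,1)}+c_1\chi_{B(0,\varepsilon)}$ and $g=\chi_{B(0,1)}+c_2\chi_{B(0,\varepsilon)}$, with heights $c_1,c_2>0$ and $\varepsilon$ small. Writing $A_i=1+c_i$, a direct evaluation of the three Morrey norms (for $\varepsilon$ small the relevant suprema are again governed by the outer radius $1$) reduces $(2)$, after cancelling the common power of $|B(0,1)|$ via the already-proved $q$-relation, to
\[
\bigl(1+((A_1A_2)^p-1)\varepsilon^d\bigr)^{1/p}\le\bigl(1+(A_1^{p_1}-1)\varepsilon^d\bigr)^{1/p_1}\bigl(1+(A_2^{p_2}-1)\varepsilon^d\bigr)^{1/p_2}.
\]
Differentiating at $\varepsilon^d=0$ (both sides equal $1$ there), the necessary condition becomes $\tfrac1p((A_1A_2)^p-1)\le\tfrac1{p_1}(A_1^{p_1}-1)+\tfrac1{p_2}(A_2^{p_2}-1)$ for all $A_1,A_2\ge1$. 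Substituting $A_1=s^{1/p_1}$, $A_2=t^{1/p_2}$, this is precisely $s^{\alpha}t^{\beta}\le\alpha s+\beta t+(1-\alpha-\beta)$ with $\alpha=\tfrac{p}{p_1}$ and $\beta=\tfrac{p}{p_2}$; by the weighted arithmetic-geometric mean (Young's) inequality this holds for all $s,t\ge1$ if and only if $\alpha+\beta\le1$, i.e. $\tfrac1{p_1}+\tfrac1{p_2}\le\tfrac1p$ (for the ``only if'', taking $s=t\to\infty$ when $\alpha+\beta>1$ produces a violation). The delicate point I expect to spend the most care on is verifying that the Morrey norms of these nested test functions are indeed controlled by the outer radius for small $\varepsilon$, so that the linearization is legitimate; once that is secured, the Young-inequality characterization closes the argument.
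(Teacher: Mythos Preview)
Your sufficiency argument $(1)\Rightarrow(2)$ and the recovery of the relation $\frac1{q_1}+\frac1{q_2}=\frac1q$ via $f=g=\chi_{B(0,R)}$ coincide with the paper's proof (the paper treats the general $m$-factor statement, of which this theorem is the case $m=2$). The genuine difference is in how you extract the condition $\frac1{p_1}+\frac1{p_2}\le\frac1p$. The paper tests the inequality with the single function
\[
g_{\epsilon,K}(x)=\chi_{\{0\le|x|<1\}}(x)+\sum_{j=1}^{K}\chi_{\{j\le|x|\le j+j^{-\epsilon}\}}(x),
\]
a union of thin annuli at large scale; it then estimates the Morrey norms from above and below by powers of $K+K^{-\epsilon}$ and lets $K\to\infty$ to force the exponent relation. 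Your approach instead works at \emph{small} scale: a two-level radial step function supported in the unit ball, with the inner bump shrinking, followed by a first-order comparison at $\varepsilon^d=0$ and a Young/AM--GM characterization of when $s^{\alpha}t^{\beta}\le\alpha s+\beta t+(1-\alpha-\beta)$ holds for all $s,t\ge1$. Both are valid. Your route is cleaner in that it uses the exact identity $\|\chi_{B}\|_{\mathcal{M}^{s}_{t}}=|B|^{1/t}$ and elementary calculus, and the verification you flag as ``delicate'' (that for small $\varepsilon$ each Morrey norm is realized on the outer ball $B(0,1)$) goes through because the test functions are radially nonincreasing and the quantity $V\mapsto V^{1/t-1/s}\bigl((A^{s}-1)V_{\varepsilon}+V\bigr)^{1/s}$ is increasing once $V$ exceeds a threshold of order $V_{\varepsilon}$. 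The paper's annulus construction, on the other hand, is more portable: it transfers verbatim to the weak and generalized Morrey settings treated later in the paper, where exact norm identities and sharp linearizations are not available and one must work with two-sided constant bounds.
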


\bigskip

Let us now move to the weak Morrey spaces. For $1\le p\le q<\infty$, the {\it weak
Morrey space} $w\mathcal{M}^p_q(\mathbb{R}^d)$ is the set of all measurable
functions $f$ on $\R^d$ for which $\|f\|_{w\mathcal{M}^p_q} < \infty$, where
\[
\|f\|_{w\mathcal{M}^p_q} := \sup_{a\in\R^d,\,r,\gamma > 0}
|B(a,r)|^{\frac{1}{q} - \frac{1}{p}}\,\gamma\left|\{x \in B(a,r)\,:\,|f(x)| > \gamma\}
\right|^{\frac{1}{p}}.
\]

Note that $\|\cdot\|_{w\mathcal{M}^p_q}$ defines a quasi-norm on $w\mathcal{M}^p_q(\mathbb{R}^d)$.
If $q = p$, then $w\cM^p_q(\R^d) = wL^p(\R^d)$. Here, $w\mathcal{M}^p_q(\mathbb{R}^d)$ can be viewed
as a generalization of the weak Lebesgue space $wL^{p}(\mathbb{R}^{d})$. The relation between
$w\mathcal{M}^p_q(\mathbb{R}^d)$ and $\mathcal{M}^p_q(\mathbb{R}^d)$ is shown in the following lemma.

\bigskip

\begin{lemma}\label{lemma:1.2}{\rm \cite{Gunawan}}
Let $1\le p\le q<\infty$. Then $\mathcal{M}^p_q(\mathbb{R}^d) \subseteq w\mathcal{M}^p_q(\mathbb{R}^d)$
with
\[
\|f\|_{w\mathcal{M}^p_q} \leq \|f\|_{\cM^p_q}
\]
for every $f \in \mathcal{M}^p_q(\mathbb{R}^d)$.
\end{lemma}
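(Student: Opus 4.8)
The plan is to reduce everything to a single ball and a single threshold, and then invoke Chebyshev's (Markov's) inequality to compare the weak-type quantity against the strong-type integral. The point is that the geometric normalization factor $|B(a,r)|^{\frac{1}{q}-\frac{1}{p}}$ appears identically in both the $w\cM^p_q$-quasinorm and the $\cM^p_q$-norm, so it can be carried along as a common multiplier and never needs to be estimated.

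First I would fix an arbitrary $a \in \R^d$ and $r, \gamma > 0$, and denote the superlevel set by $E_\gamma := \{x \in B(a,r) : |f(x)| > \gamma\}$. On $E_\gamma$ one has $|f(x)|^p > \gamma^p$, so integrating this pointwise bound over $E_\gamma$ and then enlarging the domain of integration to all of $B(a,r)$ gives the Chebyshev estimate
\[
\gamma^p\,|E_\gamma| \;\le\; \int_{E_\gamma} |f(y)|^p \, dy \;\le\; \int_{B(a,r)} |f(y)|^p \, dy.
\]
Taking $p$-th roots yields $\gamma\,|E_\gamma|^{\frac1p} \le \bigl( \int_{B(a,r)} |f(y)|^p\,dy \bigr)^{\frac1p}$.

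Next I would multiply both sides of this inequality by the nonnegative factor $|B(a,r)|^{\frac1q - \frac1p}$. The right-hand side then becomes exactly the expression whose supremum defines $\|f\|_{\cM^p_q}$, so it is bounded above by $\|f\|_{\cM^p_q}$, while the left-hand side is precisely the expression whose supremum defines $\|f\|_{w\cM^p_q}$. Since the resulting bound $|B(a,r)|^{\frac1q - \frac1p}\,\gamma\,|E_\gamma|^{\frac1p} \le \|f\|_{\cM^p_q}$ holds for every choice of $a$, $r$, and $\gamma$, taking the supremum over all such triples on the left gives $\|f\|_{w\cM^p_q} \le \|f\|_{\cM^p_q}$, which is the claimed quasinorm bound. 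The inclusion $\cM^p_q(\R^d) \subseteq w\cM^p_q(\R^d)$ follows immediately, since any $f$ with finite $\cM^p_q$-norm then has finite $w\cM^p_q$-quasinorm.

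There is no genuine obstacle here: the argument is the standard Chebyshev passage from strong-type to weak-type control, and the only thing to watch is bookkeeping, namely that the exponent $\frac1q - \frac1p$ is identical on both sides and that the domain enlargement from $E_\gamma$ to $B(a,r)$ is valid because the integrand $|f|^p$ is nonnegative. The mild subtlety worth stating explicitly is that the comparison must be done \emph{before} taking suprema, ball by ball and level by level, after which the supremum over $(a,r,\gamma)$ is monotone and passes through the inequality cleanly.
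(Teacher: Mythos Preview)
Your argument is correct: the Chebyshev estimate $\gamma^p|E_\gamma|\le\int_{B(a,r)}|f|^p$ is exactly what is needed, and the normalization factor $|B(a,r)|^{1/q-1/p}$ indeed carries through unchanged. The paper does not actually supply its own proof of this lemma; it is stated with a citation to \cite{Gunawan} and used as a known fact, so there is nothing to compare your approach against beyond noting that what you wrote is the standard proof one would expect to find in the cited reference.
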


\bigskip

This lemma will be useful for us to study sufficient and necessary conditions
for generalized H\"{o}lder's inequality in weak Morrey spaces.

Next we present the definition of generalized Morrey spaces and generalized weak Morrey spaces.
For $1\leq p \leq q<\infty$, let $\mathcal{G}_p$ be the set of all functions $\phi: (0,\infty)
\rightarrow (0,\infty)$ such that $\phi$ is {\it almost decreasing} (i.e. there exists $C>0$
such that $\phi(r)\ge C\,\phi(s)$ for every $0<r<s<\infty$) and $r^{\frac{d}{p}}\phi(r)$ is
{\it almost increasing} (i.e. there exists $C>0$ such that $r^{\frac{d}{p}}\phi(r)\le C\,s^{\frac{d}{p}}
\phi(s)$ for every $0<r<s<\infty$). Note that if $\phi \in \mathcal{G}_p$, then $\phi$ satisfies the
{\it doubling condition}, that is, there exists $C > 0$ such that $\frac{1}{C} \le
\frac{\phi(r)}{\phi(s)} \le C$ whenever $ 1 \leq \frac{r}{s} \le 2$. For $ \phi \in \mathcal{G}_p$,
the \textit{generalized Morrey space} $\mathcal{M}^{p}_{\phi}(\mathbb{R}^d)$
is defined as the set of measurable functions $f$ on $\mathbb{R}^d$ for which
\[
\|f\|_{\cM^p_\phi}:=\sup_{a \in \mathbb{R}^d,r>0 } \ \frac{1}{\phi(r)}
\left( \frac{1}{|B(a,r)|} \int_{B(a,r)} |f(x)|^p\ dx  \right)^{\frac{1}{p}} <\infty.
\]
Note that $\mathcal{M}^{p}_{\phi}(\mathbb{R}^d) = \mathcal{M}^{p}_{q}(\mathbb{R}^d)$ for
$\phi(r):= r^{-\frac{d}{q}}$,  $ 1 \leq p \leq q < \infty$.
Meanwhile, for $ \phi \in \mathcal{G}_p$, the {\it generalized weak Morrey space}
$w\cM^p_{\phi}(\R^d)$ is defined to be the set of all measurable functions $f$ on
$\R^d$ such that
\[
\|f\|_{w\cM^p_{\phi}}:=\sup_{a\in \R^d,\, r,\gamma>0}
\frac{\gamma\left|\{x \in B(a,r)\,:\,|f(x)| > \gamma\}\right|^{\frac{1}{p}}}
{\phi(r) |B(a,r)|^{\frac{1}{p}}} < \infty.
\]
Here $\|\cdot\|_{w\mathcal{M}^p_{\phi}}$ is a quasi-norm on $w\mathcal{M}^p_{\phi}(\mathbb{R}^d)$.
Furthermore, $w\mathcal{M}^p_{\phi}(\mathbb{R}^d)=w\mathcal{M}^p_{q}(\mathbb{R}^d)$ for
$\phi(r) := r^{-\frac{d}{q}}$. The relation between the generalized Morrey spaces and
their weak type is given in the following lemma.

\bigskip

\begin{lemma}\label{lemma:1-3}
Let $1\le p<\infty$ and $\phi \in \cG_p$. Then $\cM^p_{\phi}(\mathbb{R}^d)\subseteq
w\cM^p_{\phi}(\mathbb{R}^d)$ with
\[
\|f\|_{w\cM^p_{\phi}} \le \|f\|_{\cM^p_{\phi}}
\]
for every $f\in \cM^p_{\phi}(\mathbb{R}^d)$.
\end{lemma}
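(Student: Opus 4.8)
The plan is to follow exactly the template of Lemma~\ref{lemma:1.2}, since the generalized Morrey norm and its weak counterpart are built from suprema over the \emph{same} family of balls $B(a,r)$, only with the factor $|B(a,r)|^{\frac{1}{q}-\frac{1}{p}}$ replaced by $\frac{1}{\phi(r)|B(a,r)|^{1/p}}$. The whole inequality will therefore reduce to a pointwise (or rather, ball-by-ball) comparison between the weak quantity and the strong quantity, supplied by Chebyshev's inequality. Concretely, I expect to prove the sharper local statement that for each fixed ball the weak-type average is bounded by the strong-type average, and then pass to the supremum.

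First I would fix arbitrary $a\in\R^d$, $r>0$, and $\gamma>0$, and abbreviate $E:=\{x\in B(a,r):|f(x)|>\gamma\}$. On $E$ we have $|f(x)|^p>\gamma^p$, so Chebyshev's inequality gives
\[
\gamma^p\,|E|\le\int_{E}|f(x)|^p\,dx\le\int_{B(a,r)}|f(x)|^p\,dx.
\]
Taking $p$-th roots yields $\gamma\,|E|^{\frac{1}{p}}\le\bigl(\int_{B(a,r)}|f(x)|^p\,dx\bigr)^{\frac{1}{p}}$. Next I would divide both sides by $\phi(r)\,|B(a,r)|^{\frac{1}{p}}$, which is legitimate since $\phi(r)>0$, to obtain
\[
\frac{\gamma\,|E|^{\frac{1}{p}}}{\phi(r)\,|B(a,r)|^{\frac{1}{p}}}
\le\frac{1}{\phi(r)}\left(\frac{1}{|B(a,r)|}\int_{B(a,r)}|f(x)|^p\,dx\right)^{\frac{1}{p}}
\le\|f\|_{\cM^p_{\phi}}.
\]
The final step is to take the supremum of the left-hand side over all $a\in\R^d$ and $r,\gamma>0$, which by definition is $\|f\|_{w\cM^p_{\phi}}$, giving $\|f\|_{w\cM^p_{\phi}}\le\|f\|_{\cM^p_{\phi}}$; in particular $f\in\cM^p_{\phi}(\R^d)$ forces the right-hand side to be finite, so $f\in w\cM^p_{\phi}(\R^d)$ and the inclusion follows.

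I do not anticipate a genuine obstacle here: the argument is a direct transcription of the classical passage from strong to weak norms, and the generalizing function $\phi$ enters only as a fixed positive multiplicative weight that survives intact through Chebyshev's inequality. The only points requiring mild care are that the structural hypotheses on $\phi\in\cG_p$ (almost decreasing, $r^{d/p}\phi(r)$ almost increasing, doubling) are \emph{not} needed for this particular estimate---positivity of $\phi$ is all that is used---and that the supremum on the right is taken only over $(a,r)$ while the one on the left also ranges over $\gamma$, so the chain of inequalities must hold uniformly in $\gamma$ before passing to the supremum, which it does.
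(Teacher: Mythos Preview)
Your argument is correct: Chebyshev's inequality gives the ball-by-ball comparison, and taking the supremum over $a,r,\gamma$ yields the claimed norm inequality with constant $1$. You are also right that the structural hypotheses on $\phi\in\cG_p$ play no role here beyond positivity.

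As for comparison with the paper: the paper does not actually supply a proof of this lemma---it is stated in the preliminaries and used freely thereafter (in particular inside the proof of Lemma~\ref{lemma:3.1}), but no argument is given, nor is a reference attached (unlike Lemma~\ref{lemma:1.2}, which is cited from \cite{Gunawan}). Your proof is the standard one and would be a suitable insertion.
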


\bigskip

In Section 2 we state our main results, and in Section 3 we present the proofs.

\section{Statement of The Results}

Our main results are presented in the following theorems. The first theorem is more general
than Theorem \ref{theorem:1.1}.

\bigskip

\begin{theorem}\label{theorem:2.3}
Let $m \ge 2$. If $1\le p \le q<\infty$ and $1\le p_i \le q_i < \infty$ for $i=1,\dots, m$,
then the following statements are equivalent:

{\parindent=0cm
{\rm (1)} $\sum \limits_{i=1}^m\frac{1}{p_i} \le \frac{1}{p}$ and
$\sum \limits_{i=1}^m\frac{1}{q_i}=\frac{1}{q}$.
			
{\rm (2)} $\left\| \prod\limits_{i=1}^m f_i \right\|_{\mathcal{M}^p_q} \le \prod\limits_{i=1}^m
\| f_i \|_{\mathcal{M}^{p_i}_{q_i}}$ for every $f_i\in \mathcal{M}_{q_i}^{p_i}(\mathbb{R}^d)$,
$i=1,\dots,m$.
				
\par}
\end{theorem}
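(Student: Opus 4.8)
The plan is to prove the two implications separately: $(1)\Rightarrow(2)$ is a ball-by-ball application of the generalized H\"older inequality, while $(2)\Rightarrow(1)$ is obtained by testing the inequality against carefully chosen functions.

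For $(1)\Rightarrow(2)$ I would fix a ball $B=B(a,r)$ and estimate $\int_B\prod_{i=1}^m|f_i|^p$. Writing $g_i=|f_i|^p$ and $s_i=p_i/p\ge 1$, the hypothesis $\sum 1/p_i\le 1/p$ becomes $\sum 1/s_i\le 1$. When $\sum 1/s_i=1$ I apply the classical generalized H\"older inequality on $B$ directly; when $\sum 1/s_i<1$ I insert the constant function $1$ as an extra factor with conjugate exponent $s_0$ given by $1/s_0=1-\sum 1/s_i>0$, which produces an additional factor $|B|^{1/s_0}$. Since $g_i^{s_i}=|f_i|^{p_i}$, taking the $p$-th root and multiplying by $|B|^{1/q-1/p}$ leaves me with $|B|^{E}\prod_i\big(|B|^{1/q_i-1/p_i}(\int_B|f_i|^{p_i})^{1/p_i}\big)$, and each parenthesised factor is bounded by $\|f_i\|_{\mathcal{M}^{p_i}_{q_i}}$. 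The only remaining point is that the leftover exponent vanishes: a direct computation gives $E=1/q-\sum 1/q_i$, so $\sum 1/q_i=1/q$ is exactly what forces $E=0$ and yields (2). This direction is routine once the bookkeeping is set up; note that $1/s_0\ge 0$ uses $\sum 1/p_i\le 1/p$ and the cancellation uses $\sum 1/q_i=1/q$, so both parts of (1) enter.

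For $(2)\Rightarrow(1)$ I would first extract $\sum 1/q_i=1/q$ by testing with characteristic functions of balls. The key estimate is $\|\chi_{B(a,r)}\|_{\mathcal{M}^{p_i}_{q_i}}=|B(a,r)|^{1/q_i}$, obtained by a short case analysis on whether the competing ball $B'$ is smaller or larger than $B(a,r)$; in particular the value is independent of $p_i$. Likewise $\|\chi_{B(a,r)}\|_{\mathcal{M}^{p}_{q}}=|B(a,r)|^{1/q}$. Taking $f_i=\chi_{B(a,r)}$ for every $i$, so that $\prod_i f_i=\chi_{B(a,r)}$, inequality (2) becomes $|B(a,r)|^{1/q}\le |B(a,r)|^{\sum 1/q_i}$ for all $r>0$; letting $r\to\infty$ and then $r\to 0$ forces $\sum 1/q_i=1/q$.

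The remaining and most delicate point is $\sum 1/p_i\le 1/p$. Here characteristic functions are useless, since $\|\chi_{B}\|_{\mathcal{M}^{p_i}_{q_i}}=|B|^{1/q_i}$ does not see $p_i$ at all; one must test with functions whose amplitude varies across scales. My plan is to use power-type functions $f_i(x)=|x|^{-\alpha_i}\chi_{B(0,1)}(x)$ with $\alpha_i\le d/q_i$, whose Morrey norms are computable explicitly (the defining supremum is governed by balls centred at the origin and carries a factor $\big(d-\alpha_i p_i\big)^{-1/p_i}$). The product is $\prod_i f_i=|x|^{-\beta}\chi_{B(0,1)}$ with $\beta=\sum\alpha_i\le\sum d/q_i=d/q$, so (2) reduces to an explicit inequality between these norm constants. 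I expect this comparison to fail precisely when $\sum 1/p_i>1/p$, so that a suitable choice of the $\alpha_i$ — and, in the borderline regime $p=q$, letting some $\alpha_i p_i\uparrow d$ so that the local integrability of the product degrades strictly faster than that of each factor — produces the required counterexample. Carrying out this norm computation and checking that the threshold it produces is exactly $\sum 1/p_i\le 1/p$, rather than some coarser condition, is the main obstacle, and I anticipate it may require separating the cases $p=q$ and $p<q$.
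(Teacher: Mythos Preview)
Your argument for $(1)\Rightarrow(2)$ and for the $q$-condition in $(2)\Rightarrow(1)$ is correct and coincides with the paper's (the paper writes the first part via an auxiliary exponent $p^{*}$ with $\tfrac{1}{p^{*}}=\sum\tfrac{1}{p_i}$, which is exactly your ``insert the constant $1$'' bookkeeping).

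The gap is in your extraction of $\sum\frac{1}{p_i}\le\frac{1}{p}$ by power functions. For $f_i(x)=|x|^{-\alpha_i}\chi_{B(0,1)}$ to belong to $\mathcal{M}^{p_i}_{q_i}$ you need $\alpha_i\le d/q_i$ (otherwise the supremum over small balls centred at $0$ diverges). Then $\beta=\sum_i\alpha_i\le\sum_i d/q_i=d/q$, hence $\beta p\le dp/q\le d$, with \emph{strict} inequality whenever $p<q$. In that regime both $\|\prod f_i\|_{\mathcal{M}^p_q}$ and each $\|f_i\|_{\mathcal{M}^{p_i}_{q_i}}$ are fixed finite numbers with no free parameter left to push to infinity, so inequality (2) degenerates to a single numerical inequality between constants and cannot force any relation among the $p_i$. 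The blow-up $(d-\alpha_i p_i)^{-1/p_i}\to\infty$ you are counting on would require $\alpha_i\to d/p_i$, which lies outside the admissible range $\alpha_i\le d/q_i$ unless $p_i=q_i$; your test therefore succeeds only in the Lebesgue case $p_i=q_i$. Rescaling the cutoff radius does not help either, since once $\sum 1/q_i=1/q$ both sides scale identically.

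The paper uses a different family: for small $\epsilon>0$ and $K\in\mathbb N$ it sets
\[
g_{\epsilon,K}(x)=\chi_{\{|x|<1\}}(x)+\sum_{j=1}^{K}\chi_{\{j\le|x|\le j+j^{-\epsilon}\}}(x),
\]
and takes every $f_i=g_{\epsilon,K}$. Since $g_{\epsilon,K}$ is $\{0,1\}$-valued, $\big|\prod f_i\big|^{p}=g_{\epsilon,K}=|f_i|^{p_i}$, and its mass on $B(0,K+K^{-\epsilon})$ is $\asymp K^{d-\epsilon}$. Evaluating on that ball gives $\big\|\prod f_i\big\|_{\mathcal{M}^p_q}\ge c\,K^{d/q-\epsilon/p}$, while a direct estimate yields $\|f_i\|_{\mathcal{M}^{p_i}_{q_i}}\le C\,K^{d/q_i-\epsilon/p_i}$. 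Feeding these into (2) and using $\sum 1/q_i=1/q$ gives $K^{\epsilon(\sum 1/p_i-1/p)}\le C$ for all $K$, hence $\sum 1/p_i\le 1/p$. The mechanism is that the exponent $p$ enters only through the $1/p$-th root of the \emph{mass}, so one needs a test function whose mass on a large ball grows at a sub-volume rate $K^{d-\epsilon}$; truncated powers on the unit ball cannot provide this.
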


\bigskip

\begin{theorem}\label{theorem:2.7}
Let $m\ge 2$. If $1\le p \le q<\infty$ and $1\le p_i \le q_i < \infty$ for $i=1,\dots, m$,
then the following statements are equivalent:

{\parindent=0cm
{\rm (1)} $\sum \limits_{i=1}^m\frac{1}{p_i} \leq \frac{1}{p}$ and $\sum \limits_{i=1}^m\frac{1}{q_i}=
\frac{1}{q}$.
		
{\rm (2)} $\left\| \prod\limits_{i=1}^m f_i \right\|_{w\mathcal{M}^p_q}\leq m \prod\limits_{i=1}^m
\| f_i \|_{w\mathcal{M}^{p_i}_{q_i}}$ for every $f_i\in w\mathcal{M}_{q_i}^{p_i}(\mathbb{R}^d)$,
$i=1,\dots,m$.
\par}
\end{theorem}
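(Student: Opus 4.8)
The plan is to establish the two implications $(1)\Rightarrow(2)$ and $(2)\Rightarrow(1)$ separately, following the architecture of Theorem \ref{theorem:2.3} but systematically replacing the integral (strong) estimates by distributional (weak) ones; the extra factor $m$ will be produced by a union bound that has no analogue in the strong case. For the sufficiency $(1)\Rightarrow(2)$, write $F=\prod_{i=1}^m f_i$ and $A_i=\|f_i\|_{w\cM^{p_i}_{q_i}}$. After discarding the trivial case where some $A_i=0$ and dividing each $f_i$ by $A_i$, I may assume $A_i=1$. Fix a ball $B=B(a,r)$ with $|B|=V$ and a height $\gamma>0$. The engine is the elementary superlevel-set inclusion: whenever $\prod_{i=1}^m\gamma_i=\gamma$ one has $\{x:|F(x)|>\gamma\}\subseteq\bigcup_{i=1}^m\{x:|f_i(x)|>\gamma_i\}$, so that $|\{x\in B:|F|>\gamma\}|\le\sum_{i=1}^m|\{x\in B:|f_i|>\gamma_i\}|\le\sum_{i=1}^m\gamma_i^{-p_i}V^{1-p_i/q_i}$, the last step being just the definition of $\|f_i\|_{w\cM^{p_i}_{q_i}}=1$. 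I then choose the thresholds $\gamma_i$ so that all $m$ summands coincide, which forces each $\gamma_i$ to be an explicit power of $\gamma$ and $V$ and makes the union bound contribute precisely the constant $m$.

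The equalized estimate closes cleanly only when $\sum_i 1/p_i$ matches the target local exponent, so I first treat the borderline exponent $p'$ defined by $1/p'=\sum_{i=1}^m 1/p_i$. Since $p_i\le q_i$ gives $1/p'\ge\sum_i 1/q_i=1/q$ while hypothesis (1) gives $1/p'\le 1/p$, we have $p\le p'\le q$. For this $p'$ the computation above yields $\|F\|_{w\cM^{p'}_q}\le m^{1/p'}\prod_i A_i\le m\prod_i A_i$. To return from $p'$ to the required $p$ I invoke the weak Morrey embedding $\|F\|_{w\cM^{p}_q}\le\|F\|_{w\cM^{p'}_q}$, valid for $p\le p'\le q$, which at the level of a single ball is immediate from $|\{x\in B:|F|>\gamma\}|\le|B|$ together with the sign of $\tfrac1p-\tfrac1{p'}$. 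Combining the two displays gives (2).

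For the necessity $(2)\Rightarrow(1)$, the basic input is the estimate $\|\chi_{B(a,R)}\|_{w\cM^p_q}=|B(a,R)|^{1/q}$, which is \emph{independent of $p$} and is obtained by testing the defining supremum against balls that either lie inside or contain $B(a,R)$. Choosing $f_1=\dots=f_m=\chi_{B(0,R)}$, so that $\prod_i f_i=\chi_{B(0,R)}$, turns (2) into $|B(0,R)|^{1/q}\le m\,|B(0,R)|^{\sum_i 1/q_i}$; letting the radius $R$ range over $(0,\infty)$ forces the two exponents to agree, i.e. $\sum_i 1/q_i=1/q$.

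The inequality $\sum_i 1/p_i\le 1/p$ is the delicate point, and I expect it to be the main obstacle, because solid balls cannot detect $p$: their weak Morrey norm only involves $q$. To see $p$ I will instead test (2) against characteristic functions of \emph{porous} sets. Fix $R$ and a density $\delta\in(0,1)$, and let $S\subseteq B(0,R)$ be a union of tiny balls of radius $\sigma$ arranged periodically so that $|S|=\delta\,|B(0,R)|$; taking $f_1=\dots=f_m=\chi_S$ again gives $\prod_i f_i=\chi_S$. Evaluating the target norm at the single ball $B(0,R)$ produces the clean lower bound $\|\chi_S\|_{w\cM^p_q}\ge c_d^{1/q}R^{d/q}\delta^{1/p}$, where $c_d=|B(0,1)|$, while the crux is the matching upper bound $\|\chi_S\|_{w\cM^{p_i}_{q_i}}\le C\,R^{d/q_i}\delta^{1/p_i}$ with $C$ independent of $\delta$. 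Substituting these into (2) and using $\sum_i 1/q_i=1/q$ reduces the inequality to $\delta^{\,1/p-\sum_i 1/p_i}\le C'$ for all small $\delta$, whence letting $\delta\to0^+$ forces $1/p-\sum_i 1/p_i\ge 0$. The hard part is exactly the claimed upper bound: one must control the defining supremum over balls of \emph{every} radius, and the local density of $S$ rises to $1$ at scales below $\sigma$, creating a competing small-scale contribution of size $\sim\sigma^{d/q_i}$. I plan to absorb it by selecting $\sigma=\sigma(\delta)$ small enough that this term is dominated by the top-scale term $R^{d/q_i}\delta^{1/p_i}$, so that the supremum is genuinely attained, up to a constant, at scale $R$.
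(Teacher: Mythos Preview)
Your plan is correct, but it diverges from the paper's proof in both directions.

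For $(1)\Rightarrow(2)$, the paper simply cites an external reference, while the parallel argument written out in the generalized setting (Theorem~\ref{theorem:3.4}) goes through Young's inequality for products to split the superlevel set. Your threshold--equalization argument based on the elementary inclusion $\{|F|>\prod\gamma_i\}\subseteq\bigcup_i\{|f_i|>\gamma_i\}$ is the standard weak-$L^p$ H\"older computation; it is shorter and gives the sharper constant $m^{1/p'}\le m$ directly, after which the one-line embedding $\|F\|_{w\mathcal{M}^{p}_{q}}\le\|F\|_{w\mathcal{M}^{p'}_{q}}$ (valid because $|\{x\in B:|F|>\gamma\}|\le|B|$ and $1/p\ge 1/p'$) finishes the job.

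For $(2)\Rightarrow(1)$, the characteristic-function step for $\sum_i 1/q_i=1/q$ matches the paper, but to extract $\sum_i 1/p_i\le 1/p$ the paper reuses the radial test function $g_{\epsilon,K}=\chi_{\{|x|<1\}}+\sum_{j=1}^{K}\chi_{\{j\le|x|\le j+j^{-\epsilon}\}}$ from Theorem~\ref{theorem:2.3}: the lower bound is read off at the single ball $B(0,K+K^{-\epsilon})$, and the upper bound on $\|f_i\|_{w\mathcal{M}^{p_i}_{q_i}}$ comes for free from Lemma~\ref{lemma:1.2} together with the strong-Morrey estimate already established in Theorem~\ref{theorem:2.3}. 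Your periodic porous set $S\subset B(0,R)$ with density $\delta$ is a genuinely different construction; it trades that recycling for a self-contained weak-norm computation, at the cost of the multi-scale upper bound you flagged. Your plan to kill the small-scale contribution $\sigma^{d/q_i}$ by taking $\sigma\le R\,\delta^{\max_i q_i/(dp_i)}$ works (note $q_i/p_i\ge 1$ guarantees this is compatible with the geometric constraint $\ell\sim\sigma\delta^{-1/d}\le R$), and the intermediate-scale bound $|B(a,r)\cap S|\le C\delta|B(a,r)|$ for $\ell\lesssim r\le R$ follows from periodicity with a dimensional constant. The paper's route is more economical within the paper's logical flow; yours is more portable and avoids appealing to the strong-space computation.
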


\medskip

For generalized Morrey spaces, we have the following theorems.

\bigskip

\begin{theorem}\label{theorem:3.1}
Let $m \ge 2$, $1 \le p, p_i < \infty$ with $\sum \limits_{i=1}^m\frac{1}{p_i}
\leq \frac{1}{p}$, $\phi \in \mathcal{G}_p$, and $\phi_i \in \mathcal{G}_{p_i}$
for $i = 1, \dots, m$.

{\parindent=0cm
{\rm (1)}
If $\prod\limits_{i=1}^m \phi_i(r) \le \phi(r)$ for every $r>0$, then
$\left\| \prod\limits_{i=1}^m f_i \right\|_{\mathcal{M}^p_\phi}
\leq \prod\limits_{i=1}^m \| f_i \|_{\mathcal{M}^{p_i}_{\phi_i}}$
for every $f_i\in \mathcal{M}_{\phi_i}^{p_i}(\mathbb{R}^d)$, $i=1,\dots,m$.
			
{\rm (2)}
If $\left\| \prod\limits_{i=1}^m f_i \right\|_{\mathcal{M}^p_\phi}
\leq \prod\limits_{i=1}^m \| f_i \|_{\mathcal{M}^{p_i}_{\phi_i}}$
for every $f_i\in \mathcal{M}_{\phi_i}^{p_i}(\mathbb{R}^d)$, $i=1,\dots,m$,
then there exists $C>0$ such that $\prod\limits_{i=1}^m \phi_i(r) \le C\,\phi(r)$
for every $r>0$.
\par}
\end{theorem}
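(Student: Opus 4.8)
The plan is to reduce both implications to a single two-sided estimate for the generalized Morrey norm of the characteristic function of a ball. First I would establish that there is a constant $C_0 > 0$, depending only on $d$, $p$, and the structural constants of $\phi \in \cG_p$, with
\[
\frac{1}{\phi(R)} \le \bigl\| \chi_{B(0,R)} \bigr\|_{\cM^p_\phi} \le \frac{C_0}{\phi(R)}, \qquad R > 0.
\]
The lower bound follows by evaluating the defining supremum at the ball $B(0,R)$ itself, where the averaged integral equals $1$. For the upper bound I would split the supremum over test balls $B(a,r)$ into the cases $r \le R$ and $r > R$: when $r \le R$ the average of $\chi_{B(0,R)}$ is at most $1$ and the almost decreasing property of $\phi$ gives $\frac{1}{\phi(r)} \le \frac{C}{\phi(R)}$; when $r > R$ the intersection $B(a,r)\cap B(0,R)$ has measure at most $|B(0,R)|$, so the average is at most $(R/r)^d$, and the almost increasing property of $r^{d/p}\phi(r)$ yields $\frac{(R/r)^{d/p}}{\phi(r)} = \frac{R^{d/p}}{r^{d/p}\phi(r)} \le \frac{C}{\phi(R)}$. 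This estimate is the heart of the argument and the step I expect to be the main obstacle, since it is precisely where the defining monotonicity hypotheses of $\cG_p$ are used.

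For part (1), I would fix a ball $B(a,r)$ and apply the generalized H\"older inequality for integrals on the finite measure space $B(a,r)$ to the functions $|f_i|^p$. The condition $\sum_i \frac{1}{p_i} \le \frac{1}{p}$ forces $\frac{1}{p_i} \le \frac{1}{p}$, hence $p_i \ge p$, so the exponents $p_i/p \ge 1$ are admissible; pairing the constant function $1$ with the slack exponent $r_0$ determined by $\frac{1}{r_0} := 1 - p\sum_i \frac{1}{p_i} \ge 0$ gives
\[
\int_{B(a,r)} \prod_{i=1}^m |f_i|^p \;\le\; |B(a,r)|^{\,1 - p\sum_i \frac{1}{p_i}} \prod_{i=1}^m \Bigl( \int_{B(a,r)} |f_i|^{p_i} \Bigr)^{p/p_i}.
\]
Taking $p$-th roots, dividing by $|B(a,r)|^{1/p}$, and estimating $\bigl(\int_{B(a,r)}|f_i|^{p_i}\bigr)^{1/p_i} \le \phi_i(r)\,|B(a,r)|^{1/p_i}\,\|f_i\|_{\cM^{p_i}_{\phi_i}}$, the powers of $|B(a,r)|$ cancel exactly, leaving
\[
\Bigl(\frac{1}{|B(a,r)|}\int_{B(a,r)} \prod_{i=1}^m |f_i|^p\Bigr)^{1/p} \le \Bigl(\prod_{i=1}^m \phi_i(r)\Bigr) \prod_{i=1}^m \|f_i\|_{\cM^{p_i}_{\phi_i}}.
\]
Dividing by $\phi(r)$, using the hypothesis $\prod_i \phi_i(r) \le \phi(r)$ to bound the $\phi$-factor by $1$, and taking the supremum over all balls yields the desired inequality.

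For part (2), I would test the assumed inequality on the single function $\chi_{B(0,R)}$, choosing $f_i = \chi_{B(0,R)}$ for every $i$. Since $\chi_{B(0,R)}^m = \chi_{B(0,R)}$, the left-hand side equals $\|\chi_{B(0,R)}\|_{\cM^p_\phi}$ while the right-hand side equals $\prod_i \|\chi_{B(0,R)}\|_{\cM^{p_i}_{\phi_i}}$. Applying the lower bound of the characteristic-function estimate on the left and the upper bound on each factor on the right gives
\[
\frac{1}{\phi(R)} \le \prod_{i=1}^m \frac{C_i}{\phi_i(R)},
\]
and rearranging produces $\prod_{i=1}^m \phi_i(R) \le \bigl(\prod_{i=1}^m C_i\bigr)\phi(R)$ for every $R > 0$, which is the claim with $C = \prod_{i=1}^m C_i$.
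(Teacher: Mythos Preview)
Your proposal is correct and follows essentially the same route as the paper. The characteristic-function estimate you sketch is exactly Lemma~\ref{lemma:3.1} (proved the same way, via the two cases $r\le R$ and $r>R$ using the almost-decreasing and almost-increasing conditions on $\phi$), and part~(2) is identical to the paper's argument. For part~(1) the only cosmetic difference is that the paper first passes from the $L^p$-average to the $L^{p^*}$-average (with $\tfrac{1}{p^*}=\sum_i \tfrac{1}{p_i}$) and then splits via H\"older, whereas you do both steps at once by introducing the slack exponent $r_0$ and pairing it with the constant function $1$; the two are formally equivalent, and the powers of $|B(a,r)|$ cancel in either version.
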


\bigskip
\begin{theorem}\label{theorem:3.1a}
Let $m \ge 2$ and $1 \le p, p_i < \infty$ for $i=1,\dots,m$.
If $\phi \in\mathcal{G}_p$ and $\phi_i  \in \mathcal{G}_{p_i}$ such that 	
$\prod\limits_{i=1}^m \phi_i(r) = \phi(r)$ for every $r>0$ and there exists
$\epsilon>0$ such that $r^{\frac{\epsilon}{p_i}}\phi_i(r)$ are almost
decreasing for $i=1,\dots,m$, then the following statements are equivalent:

	{\parindent=0cm
	{\rm (1)} $\sum \limits_{i=1}^m\frac{1}{p_i} \le \frac{1}{p}$.
			
	{\rm (2)} $\left\| \prod\limits_{i=1}^m f_i \right\|_{\mathcal{M}^p_\phi}
	\leq \prod\limits_{i=1}^m \| f_i \|_{\mathcal{M}^{p_i}_{\phi_i}}$
	for every $f_i\in \mathcal{M}_{\phi_i}^{p_i}(\mathbb{R}^d)$, $i=1,\dots,m$.
	\par}
\end{theorem}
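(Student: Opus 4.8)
The plan is to exploit the exact equality $\prod_{i=1}^m\phi_i=\phi$ together with the two‑sided scale control built into the hypotheses: since $\phi_i\in\mathcal{G}_{p_i}$ makes $r^{d/p_i}\phi_i(r)$ almost increasing and the extra assumption makes $r^{\epsilon/p_i}\phi_i(r)$ almost decreasing, there is a constant $C$ so that for all $0<\eta<L$
\[
C^{-1}\Bigl(\tfrac{\eta}{L}\Bigr)^{d/p_i}\le\frac{\phi_i(L)}{\phi_i(\eta)}\le C\Bigl(\tfrac{\eta}{L}\Bigr)^{\epsilon/p_i}.
\]
The implication (1)$\Rightarrow$(2) is then immediate: from $\prod_{i=1}^m\phi_i(r)=\phi(r)\le\phi(r)$ the hypotheses of Theorem~\ref{theorem:3.1}(1) are met, and that theorem delivers the inequality in (2). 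All of the work is in (2)$\Rightarrow$(1), which I would prove by contraposition: assuming $\sum_{i=1}^m\frac1{p_i}>\frac1p$, I would produce $f_i\in\mathcal{M}^{p_i}_{\phi_i}(\R^d)$ for which (2) fails.

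The test functions I would use are characteristic functions of a \emph{sparse} union of balls. Fix $0<\eta<L$, set $s=\eta/L$, and let $E=\bigcup_{j=1}^n B(c_j,\eta)\subset B(0,L)$ be a disjoint, evenly spread family of $n\approx s^{-(d-\epsilon)}$ balls; take $f_1=\dots=f_m=\chi_E$, so that $\prod_{i=1}^m f_i=\chi_E$ as well, and write $\rho=|E|/|B(0,L)|=n\,s^d$ for its density. Evaluating the $\mathcal{M}^p_\phi$–norm of $\chi_E$ on the single ball $B(0,L)$ already gives $\|\chi_E\|_{\mathcal{M}^p_\phi}\ge\rho^{1/p}/\phi(L)$. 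The heart of the matter is the matching upper bound $\|\chi_E\|_{\mathcal{M}^{p_i}_{\phi_i}}\le C\,\rho^{1/p_i}/\phi_i(L)$, i.e.\ that the defining supremum is attained, up to a constant, at the region scale $r=L$. Granting this and using $\prod_{i=1}^m\phi_i=\phi$ to cancel every weight, (2) would force
\[
\frac{\rho^{1/p}}{\phi(L)}\le C\prod_{i=1}^m\frac{\rho^{1/p_i}}{\phi_i(L)}=C\,\frac{\rho^{\sum_i 1/p_i}}{\phi(L)},
\]
that is $\rho^{\,1/p-\sum_i 1/p_i}\le C$. My choice of $n$ makes $\rho\approx s^{\epsilon}\to0$ as $s\to0$, while $1/p-\sum_i 1/p_i<0$ by assumption, so the left side blows up—a contradiction. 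Hence (2) implies $\sum_{i=1}^m\frac1{p_i}\le\frac1p$.

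The hard part, and the only place the extra hypothesis is used, is the upper estimate $\|\chi_E\|_{\mathcal{M}^{p_i}_{\phi_i}}\le C\rho^{1/p_i}/\phi_i(L)$: I must check that no ball $B(a,r)$ beats the region scale. Spreading the balls evenly keeps the local density equal to $\rho$ for every $r$ between the inter‑ball spacing and $L$, so on that range $\rho^{1/p_i}/\phi_i(r)$ is largest at $r=L$ because $\phi_i$ is almost decreasing, while balls larger than $L$ are controlled by the almost‑increase of $r^{d/p_i}\phi_i(r)$. The delicate scale is $r\approx\eta$, where $\chi_E$ fills one ball and the norm sees $1/\phi_i(\eta)$; dominating it amounts to $\phi_i(L)/\phi_i(\eta)\le C\rho^{1/p_i}$, which by the upper bound in the displayed estimate reduces to $s^{\epsilon/p_i}\lesssim(n s^d)^{1/p_i}$, i.e.\ to $n\gtrsim s^{-(d-\epsilon)}$—exactly my choice of $n$, which simultaneously keeps $\rho\approx s^{\epsilon}\to0$. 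Thus ``$r^{\epsilon/p_i}\phi_i(r)$ almost decreasing'' is precisely what guarantees that $\phi_i(\eta)$ grows fast enough for a set of \emph{vanishing} density to still saturate each norm at the region scale; verifying this scale‑by‑scale domination uniformly over all balls is the step I expect to demand the most care.
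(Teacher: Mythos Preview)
Your proposal is correct and follows the same overall strategy as the paper: for $(1)\Rightarrow(2)$ both invoke Theorem~\ref{theorem:3.1}(1), and for $(2)\Rightarrow(1)$ both test the inequality on the characteristic function of a sparse set whose density $\rho$ in a large ball tends to zero, show that each norm behaves like $\rho^{1/p_i}/\phi_i(L)$ at the region scale, cancel the weights via $\prod_i\phi_i=\phi$, and conclude $\rho^{\,1/p-\sum_i 1/p_i}\le C$.

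The difference is purely in the geometry of the test set. The paper takes $g_{\epsilon,K}=\chi_{\{|x|<1\}}+\sum_{j=1}^K\chi_{\{j\le|x|\le j+j^{-\epsilon}\}}$, a radially symmetric union of thin annuli, whose density in $B(0,K)$ is $\sim K^{-\epsilon}$; the radial symmetry lets one argue (as in the proof of Theorem~\ref{theorem:2.3}) that the supremum is attained at balls centred at the origin, reducing the scale analysis to a single parameter. Your construction uses a lattice of $n\approx s^{-(d-\epsilon)}$ small balls of radius $\eta=sL$ inside $B(0,L)$, giving density $\rho\approx s^{\epsilon}$. Your version is arguably more transparent about \emph{where} the hypothesis ``$r^{\epsilon/p_i}\phi_i(r)$ almost decreasing'' enters---precisely in dominating the scale $r\approx\eta$ by the scale $r\approx L$---whereas the paper buries this in the phrase ``by using similar arguments as in the proof of Theorem~\ref{theorem:2.3}''. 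The trade-off is that the paper's radial construction makes the ``supremum over all balls'' step shorter, while your grid construction requires the uniform-density verification over all centres and intermediate radii that you flag as the step demanding most care; that step is routine (lattice counting) but does need to be written out.
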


\noindent{\tt Remark}. In \cite{Sugano,Sugano1}, Sugano states that
$\left\| \prod\limits_{i=1}^m f_i \right\|_{\mathcal{M}^p_\phi}
\leq \prod\limits_{i=1}^m \| f_i \|_{\mathcal{M}^{p_i}_{\phi_i}}$ holds
for every $f_i\in \mathcal{M}_{\phi_i}^{p_i}(\mathbb{R}^d)$, $i=1,\dots,m$,
provided that $\sum_{i=1}^m \frac{1}{p_i} = \frac{1}{p }$ and
$\prod\limits_{i=1}^m \phi_i(r)= \phi (r)$. Theorems \ref{theorem:3.1}
and \ref{theorem:3.1a} may be viewed as counterparts of Sugano's results.

\medskip

Finally, for generalized weak Morrey spaces, the following theorems hold.

\medskip

\begin{theorem}\label{theorem:3.4}
Let $m \ge 2$, $1\le p, p_i < \infty$ with $\sum \limits_{i=1}^{m}\frac{1}{p_i}
\leq \frac{1}{p}$, $\phi \in \mathcal{G}_p$, and $\phi_i \in \mathcal{G}_{p_i}$
for $i = 1, \dots, m$.

{\parindent=0cm
{\rm (1)}
If $\prod\limits_{i=1}^m \phi_i(r) \le \phi(r)$ for every $r>0$, then
$\left\| \prod\limits_{i=1}^m f_i \right\|_{w\mathcal{M}^p_\phi}
\leq m \prod\limits_{i=1}^m \| f_i \|_{w\mathcal{M}^{p_i}_{\phi_i}}$
for every $f_i\in w\mathcal{M}_{\phi_i}^{p_i}(\mathbb{R}^d)$, $i=1,\dots,m$.
			
{\rm (2)}
If $\left\| \prod\limits_{i=1}^m f_i \right\|_{w\mathcal{M}^p_\phi}
\leq m \prod\limits_{i=1}^m \| f_i \|_{w\mathcal{M}^{p_i}_{\phi_i}}$
for every $f_i\in w\mathcal{M}_{\phi_i}^{p_i}(\mathbb{R}^d)$, $i=1,\dots,m$,
then there exists $C>0$ such that $\prod\limits_{i=1}^m \phi_i(r) \le C\,\phi(r)$
for every $r>0$.
\par}
\end{theorem}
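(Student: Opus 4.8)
The plan is to prove the two implications by different mechanisms: a super-level set (distributional) argument for the sufficiency (1), and the test functions $\chi_{B(0,R)}$ for the necessity (2). For (1), I would first reduce by homogeneity to the normalization $\|f_i\|_{w\cM^{p_i}_{\phi_i}}=1$ for all $i$ (if some factor vanishes, both sides are zero). Fixing $a\in\R^d$, $r,\gamma>0$ and writing $B=B(a,r)$, $V=|B|$, $F=\prod_{i=1}^m f_i$, the key observation is the inclusion
\[
\{x\in B:|F(x)|>\gamma\}\subseteq\bigcup_{i=1}^m\{x\in B:|f_i(x)|>\gamma_i\},
\]
valid whenever $\prod_{i=1}^m\gamma_i=\gamma$ (contrapositive: if every $|f_i(x)|\le\gamma_i$ then $|F(x)|\le\prod_i\gamma_i=\gamma$). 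The normalization gives the per-factor estimate $|\{x\in B:|f_i(x)|>\gamma_i\}|\le \phi_i(r)^{p_i}V/\gamma_i^{p_i}$. I would then first settle the \emph{equality} case $\sum_i 1/p_i=1/p$: choosing the $\gamma_i$ proportional to $\phi_i(r)$ so that the $m$ super-level set bounds coincide while $\prod_i\gamma_i=\gamma$, the union bound collapses to
\[
\gamma\,|\{x\in B:|F(x)|>\gamma\}|^{1/p}\le m^{1/p}\Bigl(\prod_{i=1}^m\phi_i(r)\Bigr)V^{1/p}\le m^{1/p}\phi(r)V^{1/p},
\]
where the last step uses $\prod_i\phi_i\le\phi$; since $m^{1/p}\le m$, this is the claim.

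To pass from the equality case to the hypothesis $\sum_i 1/p_i\le 1/p$, set $1/p_0:=\sum_i 1/p_i$, so $p\le p_0$, and interpolate the distribution function against the trivial bound $|\{x\in B:|F|>\gamma\}|\le V$: since $1/p-1/p_0\ge 0$,
\[
|\{x\in B:|F|>\gamma\}|^{1/p}\le |\{x\in B:|F|>\gamma\}|^{1/p_0}\,V^{1/p-1/p_0}.
\]
Applying the equality-case bound with $p_0$ in the role of $p$ to the first factor (which, as the computation shows, needs only $\phi_i\in\cG_{p_i}$ and $\prod_i\phi_i\le\phi$, and \emph{not} $\phi\in\cG_{p_0}$) and recombining the powers of $V$ into $V^{1/p}$ yields exactly the desired bound at exponent $p$, with constant $m^{1/p_0}\le m$. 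This closes (1).

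For (2), I would test the assumed inequality on a single ball by taking $f_1=\dots=f_m=\chi_{B(0,R)}$. Because a characteristic function satisfies $\chi_{B(0,R)}^m=\chi_{B(0,R)}$, the left-hand side equals $\|\chi_{B(0,R)}\|_{w\cM^p_\phi}$, and the argument rests on the two-sided estimate $\|\chi_{B(0,R)}\|_{w\cM^{p_i}_{\phi_i}}\le C_i/\phi_i(R)$ together with $\|\chi_{B(0,R)}\|_{w\cM^p_\phi}\ge 1/\phi(R)$, the constants $C_i$ being independent of $R$. The lower bound comes from choosing $a=0$, $r=R$ and letting $\gamma\to 1^-$; for the upper bound one notes that the super-level set is empty for $\gamma\ge1$ and equals $B(a,r)\cap B(0,R)$ for $\gamma<1$, bounds the intersection by $\min(|B(a,r)|,|B(0,R)|)$, and splits into small and large radii, using that $\phi_i$ is almost decreasing (small $r$) and $r^{d/p_i}\phi_i(r)$ is almost increasing (large $r$) to force every admissible ratio to be $\le C_i/\phi_i(R)$. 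Feeding these into the hypothesis and rearranging gives
\[
\frac{1}{\phi(R)}\le \|\chi_{B(0,R)}\|_{w\cM^p_\phi}\le m\prod_{i=1}^m\|\chi_{B(0,R)}\|_{w\cM^{p_i}_{\phi_i}}\le \frac{m\prod_{i=1}^mC_i}{\prod_{i=1}^m\phi_i(R)},
\]
i.e. $\prod_{i=1}^m\phi_i(R)\le C\,\phi(R)$ with $C=m\prod_{i=1}^m C_i$, for every $R>0$.

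The main obstacle is precisely the uniform estimate of the weak-Morrey quasi-norm of $\chi_{B(0,R)}$ used in (2): controlling the supremum over \emph{all} balls $B(a,r)$ and heights $\gamma$ — not merely the concentric ones — and extracting the clean comparison with $1/\phi(R)$ from the two almost-monotonicity conditions defining $\cG_{p_i}$. By contrast, everything in (1) is a routine (if careful) distributional computation once the super-level set inclusion and the equalizing choice of the $\gamma_i$ are in place.
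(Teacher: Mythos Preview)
Your proposal is correct. Part~(2) is exactly the paper's argument: the two-sided estimate $\frac{1}{\phi(R)}\le\|\chi_{B(0,R)}\|_{w\cM^p_\phi}\le\|\chi_{B(0,R)}\|_{\cM^p_\phi}\le\frac{C}{\phi(R)}$ that you single out as the ``main obstacle'' is precisely the paper's Lemma~3.1, proved there by the same small/large-radius split you describe.

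For part~(1) the overall architecture is the same --- normalize, reduce the exponent from $1/p$ to $1/p^*=\sum_i 1/p_i$ using $|\{|F|>\gamma\}\cap B|\le|B|$, then bound the super-level set of the product by a union of super-level sets of the factors --- but the splitting lemmas differ. The paper applies Young's inequality for products, $\prod_i a_i\le\sum_i\frac{p^*}{p_i}a_i^{p_i/p^*}$ with $a_i=|f_i|/(\phi_i(R)\|f_i\|_{w\cM^{p_i}_{\phi_i}})$, and then pigeonholes on the resulting sum; you instead use the more elementary inclusion $\{\prod_i|f_i|>\prod_i\gamma_i\}\subseteq\bigcup_i\{|f_i|>\gamma_i\}$ and optimize the thresholds $\gamma_i$ so that the individual distributional bounds coincide. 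Your route avoids Young's inequality entirely and makes the appearance of the constant $m^{1/p^*}\le m$ very transparent; the paper's route absorbs the $p\to p^*$ reduction and the $\phi\ge\prod_i\phi_i$ hypothesis into a single first step rather than interpolating at the end. Both yield the same constant $m$.
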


\bigskip

\begin{theorem}\label{theorem:3.4a}
Let $m \ge 2$ and $ 1 \le p, p_i < \infty $ for $i=1,\dots,m$.
If $\phi \in\mathcal{G}_p$ and $\phi_i \in \mathcal{G}_{p_i}$ such that 	
$\prod\limits_{i=1}^m \phi_i(r) = \phi(r)$ for every $r>0$ and there exists
$\epsilon>0$ such that $r^{\frac{\epsilon}{p_i}}\phi_i(r)$ are almost
decreasing for $i=1,\dots,m$, then the following statements are equivalent:	

	{\parindent=0cm
	{\rm (1)} $\sum \limits_{i=1}^m\frac{1}{p_i} \le \frac{1}{p}$.
		
	{\rm (2)} $\left\| \prod\limits_{i=1}^m f_i \right\|_{w\mathcal{M}^p_\phi}
	\leq m \prod\limits_{i=1}^m \| f_i \|_{w\mathcal{M}^{p_i}_{\phi_i}}$
	for every $f_i\in w\mathcal{M}_{\phi_i}^{p_i}(\mathbb{R}^d)$, $i=1,\dots,m$.
	\par}
\end{theorem}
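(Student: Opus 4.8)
The plan is to establish the two implications separately, with the forward one reducing immediately to an earlier result. For $(1)\Rightarrow(2)$ no new work is needed: the hypothesis $\prod_{i=1}^m\phi_i(r)=\phi(r)$ in particular gives $\prod_{i=1}^m\phi_i(r)\le\phi(r)$ for every $r>0$, while $(1)$ supplies $\sum_{i=1}^m\frac{1}{p_i}\le\frac{1}{p}$. Thus the hypotheses of part $(1)$ of Theorem~\ref{theorem:3.4} are met verbatim, and that theorem yields exactly the weak-type bound in $(2)$ with constant $m$. I would remark that the extra assumption that $r^{\epsilon/p_i}\phi_i(r)$ be almost decreasing plays no role here; it enters only in the converse.

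For $(2)\Rightarrow(1)$ I would argue by contraposition, assuming $\sum_{i=1}^m\frac{1}{p_i}>\frac{1}{p}$ and producing admissible $f_i$ for which $\prod_{i=1}^m f_i\notin w\mathcal{M}^p_\phi$, so that no bound of the form $(2)$ can hold. The test functions are characteristic functions of balls spread over a geometric sequence of scales. The computational starting point is that, for any admissible exponent $P$ and weight $\psi$,
\[
\|\chi_E\|_{w\mathcal{M}^{P}_{\psi}}=\sup_{a\in\R^d,\,r>0}\frac{1}{\psi(r)}\left(\frac{|E\cap B(a,r)|}{|B(a,r)|}\right)^{1/P},
\]
since for $0<\gamma<1$ the relevant level set is $E\cap B(a,r)$ and the supremum in $\gamma$ is attained as $\gamma\to1^-$. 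Applying $(2)$ to the single tuple $f_1=\dots=f_m=\chi_E$, for which $\prod_{i=1}^m f_i=\chi_E$, yields the necessary consequence
\[
\|\chi_E\|_{w\mathcal{M}^p_\phi}\le m\prod_{i=1}^m\|\chi_E\|_{w\mathcal{M}^{p_i}_{\phi_i}}.
\]
Writing $\theta_E(a,r)=|E\cap B(a,r)|/|B(a,r)|\in(0,1]$ and using $\prod_{i=1}^m\phi_i=\phi$, the weight factors combine and cancel, so at any fixed probe $(a,r)$ the ratio of the left integrand to the product of the right integrands is exactly $\theta_E(a,r)^{\frac1p-\sum_{i}\frac{1}{p_i}}$. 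Because the exponent is negative precisely when $\sum_i\frac1{p_i}>\frac1p$, this factor blows up as $\theta_E\to0$. The idea is therefore to engineer a \emph{sparse} set $E$ whose density at scale $2^k$ is a prescribed $\rho_k\in(0,1]$, tuned to the largest value keeping each source norm finite; the surviving competition between $1/p$ and $\sum_i 1/p_i$ then drives $\rho_k^{1/p}/\phi(2^k)$ to $\infty$, so that $\prod_i f_i$ escapes $w\mathcal{M}^p_\phi$.

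The hard part will be making this rigorous, i.e. ensuring a \emph{common near-maximizing probe}: the supremum defining each $\|\chi_E\|_{w\mathcal{M}^{p_i}_{\phi_i}}$ runs over \emph{all} balls $B(a,r)$, not merely centred dyadic ones, so I must verify that off-centre probes and intermediate radii do not inflate the source norms above the value at the sparse scale, which is where the pointwise gap is exploited. Here the doubling property and the almost-monotonicity built into $\mathcal{G}_{p_i}$ are exactly what make the contributions of distinct scales to any single probe geometrically summable, hence comparable to one scale. The role of the extra hypothesis is equally essential and is precisely \emph{non-degeneracy}: the condition that $r^{\epsilon/p_i}\phi_i(r)$ be almost decreasing forces $\phi_i$ to decay genuinely (at least like $r^{-\epsilon/p_i}$), ruling out the constant case $\phi_i\equiv c_i$, in which $w\mathcal{M}^{p_i}_{\phi_i}$ collapses to $L^\infty$ and $(2)$ holds for \emph{all} exponents, obstructing any converse; with genuine decay the sparse set has the room needed for the target norm to diverge. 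Finally, since the test sets are $0$--$1$ valued, the level-set supremum defining the weak norm agrees up to constants with the integral average defining the strong norm, so I expect the very same family of examples to furnish simultaneously the converse in the strong setting of Theorem~\ref{theorem:3.1a}.
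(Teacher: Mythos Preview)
Your $(1)\Rightarrow(2)$ is exactly what the paper does: invoke Theorem~\ref{theorem:3.4}(1) under the trivially satisfied inequality $\prod\phi_i\le\phi$.

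For $(2)\Rightarrow(1)$ your plan is in the same spirit as the paper's---both exploit a sparse characteristic function so that the density factor $\theta_E^{1/p-\sum 1/p_i}$ is unbounded---but the paper's execution is considerably more direct and avoids precisely the ``hard part'' you flag. Instead of contraposition with a single infinite set $E$ whose source norms must be shown finite over \emph{all} balls, the paper works directly: for each $K\in\N$ it takes the compactly supported function
\[
g_{\epsilon,K}(x)=\chi_{\{|x|<1\}}(x)+\sum_{j=1}^{K}\chi_{\{j\le|x|\le j+j^{-\epsilon}\}}(x),
\]
sets $f_1=\cdots=f_m=g_{\epsilon,K}$, and estimates both sides. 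The lower bound $\|\prod f_i\|_{w\cM^p_\phi}\ge C(K+K^{-\epsilon})^{-\epsilon/p}/\phi(K+K^{-\epsilon})$ comes from the single probe $B(0,K+K^{-\epsilon})$; the upper bound $\|f_i\|_{w\cM^{p_i}_{\phi_i}}\le\|f_i\|_{\cM^{p_i}_{\phi_i}}\le C(K+K^{-\epsilon})^{-\epsilon/p_i}/\phi_i(K+K^{-\epsilon})$ goes through the strong norm via Lemma~\ref{lemma:1-3}. After cancelling $\prod\phi_i=\phi$, one is left with $(K+K^{-\epsilon})^{-\epsilon/p+\sum\epsilon/p_i}\le C$ uniformly in $K$, forcing $\sum 1/p_i\le 1/p$.

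Two features of this make life easier than your route. First, because each $g_{\epsilon,K}$ has compact support, membership in every $w\cM^{p_i}_{\phi_i}$ is automatic; you never need to control off-centre probes at infinitely many scales simultaneously. Second, the parameter $\epsilon$ governing the annulus widths is the \emph{same} $\epsilon$ supplied by the hypothesis: the almost-decrease of $r^{\epsilon/p_i}\phi_i(r)$ is used exactly once, to pass the upper bound from the maximizing radius $L\in(2,K+K^{-\epsilon}]$ to the endpoint $K+K^{-\epsilon}$. Your observation that the hypothesis rules out $\phi_i\equiv\text{const}$ is correct intuition, but the paper exploits it quantitatively rather than merely to exclude a degenerate case. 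Your dyadic-sparse contraposition could be pushed through, but it reinvents machinery (uniform probe control across scales, a limiting density profile) that the finite-$K$ device renders unnecessary.
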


\section{Proof of Theorems}

Now we come to the proof of theorems in Section 2. Here, the letter $C$ denotes a constant that
may change from line to line. To prove our results, we shall use Lemma \ref{lemma:1.2}, Lemma
\ref{lemma:1-3}, and the following lemma.

\bigskip

\begin{lemma}\label{lemma:3.1}{\rm \cite{Eridani, EGU12, Gunawan}}
Let $1\le p<\infty$ and $\phi\in \cG_p$. Then there exists $C>0$ (depending on $\phi$) such that
\begin{equation}\label{eq1:140721}
\frac{1}{\phi(R)}\le \|\chi_{B(a_0,R)} \|_{w\cM^p_{\phi}} \le  \|\chi_{B(a_0,R)} \|_{\cM^p_{\phi}}
\le \frac{C}{\phi(R)}
\end{equation}
for every $a_0 \in \R^d$ and $R > 0$. In particular, we have
$$
R^{\frac{d}{q}}\le \|\chi_{B(a_0,R)} \|_{w\cM^p_q} \le \|\chi_{B(a_0,R)} \|_{\cM^p_q}\le
C\,R^{\frac{d}{q}}
$$
for every $a_0 \in \R^d$ and $R>0$.
\end{lemma}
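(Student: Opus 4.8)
The plan is to establish the three inequalities in \eqref{eq1:140721} separately, observing that the middle one requires no work at all. Indeed, the estimate $\|\chi_{B(a_0,R)}\|_{w\cM^p_\phi} \le \|\chi_{B(a_0,R)}\|_{\cM^p_\phi}$ is precisely Lemma \ref{lemma:1-3} applied to the particular function $f = \chi_{B(a_0,R)}$. So the proof reduces to the outer two inequalities: a lower bound for the weak norm and an upper bound for the strong norm.

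For the lower bound $\frac{1}{\phi(R)} \le \|\chi_{B(a_0,R)}\|_{w\cM^p_\phi}$, the idea is to bound the supremum defining the weak norm from below by one judicious choice of parameters. Taking the centre $a = a_0$ and radius $r = R$, note that for any $0 < \gamma < 1$ we have $\{x \in B(a_0,R) : \chi_{B(a_0,R)}(x) > \gamma\} = B(a_0,R)$, since $\chi_{B(a_0,R)} \equiv 1$ there. Substituting into the definition of $\|\cdot\|_{w\cM^p_\phi}$ yields the value $\gamma/\phi(R)$, and letting $\gamma \to 1^-$ produces the claimed bound.

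The substance of the argument is the upper bound $\|\chi_{B(a_0,R)}\|_{\cM^p_\phi} \le C/\phi(R)$. Writing out the norm,
\[
\|\chi_{B(a_0,R)}\|_{\cM^p_\phi}
= \sup_{a \in \R^d,\, r > 0} \frac{1}{\phi(r)}
\left(\frac{|B(a,r) \cap B(a_0,R)|}{|B(a,r)|}\right)^{\frac1p},
\]
I would split the supremum according to whether $r \le R$ or $r > R$. In the regime $r \le R$ the quotient of measures is at most $1$, and the almost-decreasing property of $\phi$ bounds $1/\phi(r)$ by $C/\phi(R)$. In the regime $r > R$ I would instead use $|B(a,r) \cap B(a_0,R)| \le |B(a_0,R)|$ together with the scaling $|B(a_0,R)|/|B(a,r)| = (R/r)^d$ to extract a factor $(R/r)^{d/p}$, so that the term becomes $R^{d/p}/(r^{d/p}\phi(r))$; the almost-increasing property of $r^{d/p}\phi(r)$ then converts this into $C/\phi(R)$. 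Taking the larger of the two constants gives the desired bound.

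The only delicate point, and hence the main obstacle, is matching each regime to the correct monotonicity hypothesis in the definition of $\cG_p$: the small-radius case consumes the almost-monotonicity of $\phi$ itself, the large-radius case consumes that of $r^{d/p}\phi(r)$, and the direction of each inequality must be tracked carefully so the constants land on the right side. The final assertion for $\cM^p_q$ and $w\cM^p_q$ then follows by specialising to $\phi(r) = r^{-d/q}$, for which $1/\phi(R) = R^{d/q}$ and the membership $\phi \in \cG_p$ holds precisely because $p \le q$.
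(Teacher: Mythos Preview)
Your proposal is correct and matches the paper's proof essentially step for step: the same split into $r\le R$ and $r>R$ using, respectively, the almost-decreasing property of $\phi$ and the almost-increasing property of $r^{d/p}\phi(r)$; the same appeal to Lemma~\ref{lemma:1-3} for the middle inequality; and the same choice $a=a_0$, $r=R$, $\gamma\in(0,1)$ for the lower bound on the weak norm. The specialisation to $\phi(r)=r^{-d/q}$ is also handled identically.
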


\begin{proof}
This fact is proved in \cite{Eridani, EGU12, Gunawan}; we rewrite the proof here for convenience.
Let $B_0:=B(a_0,R) \subseteq \mathbb{R}^d$ where $a_0\in \R^d$ and $R>0$. If $r\le R$, then
$\phi(r) \ge C\,\phi(R)$, so that
\[
\frac{1}{\phi(r)} \Biggl( \frac{1}{|B(a,r)|} \int\limits_{B(a,r)} |\chi_{B_0}(x)|^p dx \Biggr)^{\frac{1}{p}}
\le \frac{C}{\phi(R)} \left( \frac{|B(a,r) \cap B_0|}{|B(a,r)|}\right)^{\frac{1}{p}}
\le \frac{C}{\phi(R)}
\]
for every $a\in \R^d$. If $r\ge R$, then $r^{\frac{d}{p}}\phi(r) \ge C\,R^{\frac{d}{p}}\phi(R),$ so that
\[
\frac{1}{\phi(r)} \Biggl( \frac{1}{|B(a,r)|} \int\limits_{B(a,r)} |\chi_{B_0}(x)|^p dx\Biggr)^{\frac{1}{p}}
\le \frac{C}{\phi(R)R^{\frac{d}{p}}} |B(a,r) \cap B_0|^{\frac{1}{p}}
\le \frac{C}{\phi(R)}
\]
for every $a\in \R^d$. Hence we conclude that $\| \chi_{B_0} \|_{\cM_{\phi}^p} \le \frac{C}{\phi(R)}$.
	
Next, by Lemma \ref{lemma:1-3}, we have
\[
\|\chi_{B_0} \|_{w\cM^p_{\phi}} \le \|\chi_{B_0} \|_{\cM^p_{\phi}}.
\]
Finally, by using the definition of $\|\cdot\|_{w\cM^p_{\phi}}$, we have
\begin{align*}
\|\chi_{B_0}\|_{w\cM^p_{\phi}} &\ge \frac{\gamma}{\phi(R)} \left( \frac{|\{x \in B_0:\,
|\chi_{B_0}(x)|>\gamma \}|}{|B_0|}\right)^{\frac1p}
=\frac{\gamma}{\phi(R)} \left( \frac{|B_0|}{|B_0|}\right)^{\frac1p}
=\frac{\gamma}{\phi(R)},
\end{align*}
for every $\gamma \in (0,1)$. Therefore $\|\chi_{B_0}\|_{w\cM^p_{\phi}} \ge \frac{1}{\phi(R)}$,
and the lemma is proved.
\end{proof}

\subsection{The proof of Theorem \ref{theorem:2.3}}

\begin{proof}
$(1) \Rightarrow (2)$ Let $\sum_{i=1}^{m}\frac{1}{p_i} \le \frac{1}{p}$ and
$\sum_{i=1}^{m} \frac{1}{q_i} =\frac{1}{q}$ hold. Put $\frac{1}{p^*}:=\sum_{i=1}^{m} \frac{1}{p_i}$.
Clearly $p^*\ge p$. Now take an arbitrary $B:=B(a,R)$ and $f_i \in \mathcal{M}^{p_i}_{q_i}(\R^d)$, where
$i=1,\dots,m$. By the generalized H\"{o}lder's inequality in Lebesgue spaces \cite{Cheung}, we have
$$
|B|^{\frac{1}{q}-\frac{1}{p}} \left( \int_{B} \prod\limits_{i=1}^m |f_i(x) |^{p} dx \right)^{\frac{1}{p}} \le
|B|^{\frac{1}{q}-\frac{1}{p^*}} \left( \int_{B} \prod\limits_{i=1}^m |f_i(x) |^{p^*} dx \right)^{\frac{1}{p^*}}
\le \prod\limits_{i=1}^m |B|^{\frac{1}{q_i}-\frac{1}{p_i}} \left( \int_{B}|f(x)|^{p_i} dx \right)^{\frac{1}{p_i}}.
$$
Taking the supremum over $B$, we obtain $\left\| \prod\limits_{i=1}^m f_i \right\|_{\mathcal{M}^{p}_{q}}
\le \prod\limits_{i=1}^m \| f_i \|_{\mathcal{M}^{p_i}_{q_i}}$.

\medskip

$(2) \Rightarrow (1)$ Suppose that $\left\| \prod\limits_{i=1}^m f_i \right\|_{\mathcal{M}^{p}_{q}} \le
\prod\limits_{i=1}^m \| f_i \|_{\mathcal{M}^{p_i}_{q_i}}$ for every $f_i \in \mathcal{M}^{p_i}_{q_i}(\R^d),
\ i=1,\dots,m$. Take an arbitrary $R>0$ and choose $f_i:=\chi_{B(0,R)}$ for $i=1,\dots,m$.
It follows from the hypothesis that
$$
\| \chi_{B(0,R)} \|_{\mathcal{M}^{p}_{q}} = \left\| \prod\limits_{i=1}^m f_i \right\|_{\mathcal{M}^p_q} \le
\prod_{i=1}^m \|f_i\|_{\mathcal{M}^{p_i}_{q_i}} = \prod\limits_{i=1}^m \|\chi_{B(0,R)}\|_{\mathcal{M}^{p_i}_{q_i}}.
$$
Hence, by Lemma \ref{lemma:3.1}, we have $R^{\frac{d}{q } - \sum_{i=1}^m \frac{d}{q_i}} \le C$.
Since $R > 0$ is arbitrary, we conclude that $\sum_{i=1}^m \frac{1}{q_i} = \frac{1}{q}.$

Next, choose $0 < \epsilon < \min \{\frac{dp_1}{q_1},\dots,\frac{dp_m}{q_m}\}$.
For arbitrary $K\in \N$, we define $g_{\epsilon,K}(x):=\chi_{\{0 \le |x| <1\}}(x)+\sum_{j=1}^{K}
\chi_{\{j \le |x| \le j+j^{-\epsilon}\}}(x)$. For $i=1,\dots,m$, we define
$f_i:=g_{\epsilon,K}$. Note that $\prod_{i=1}^m f_i=g_{\epsilon,K}$ and so
$\bigl|\prod_{i=1}^m f_i\bigr|^p = g_{\epsilon,K}$. Hence, we obtain
\begin{align*}
\left\| \prod\limits_{i=1}^m f_i\right\|_{\mathcal{M}^{p}_{q }}
&= \sup_{a\in\R^d,r>0 }\ |B(a,r) |^{\frac{1}{q} - \frac{1}{p}}
\left( \int_{B(a,r)} \bigl|\prod\limits_{i=1}^m f_i(x)\bigr|^p\,dx\right)^{\frac{1}{p}}\\
&\ge |B(0,K+K^{-\epsilon})|^{\frac{1}{q }-\frac{1}{p }}
\left(\int_{B(0,K+K^{-\epsilon})} g_{\epsilon,K}(x)\ dx\right)^{\frac{1}{p} }
\nonumber
\\
&= C(K+K^{-\epsilon})^{\frac{d}{q}-\frac{d}{p}}\left(|B(0,1)|+
\sum_{j=1}^{K} \int_{j\leq |x|\leq j+j^{-\epsilon}} dx \right)^{\frac{1}{p}}\\
&= C(K+K^{-\epsilon})^{\frac{d}{q}-\frac{d}{p}}\left(|B(0,1)|+
\sum_{j=1}^{K} (|B(0,j+j^{-\epsilon})|-|B(0,j)|) \right)^{\frac{1}{p}}\\
&\ge C(K+K^{-\epsilon})^{\frac{d}{q}-\frac{d}{p}}\left(\sum_{j=1}^{K}
\bigl[(j+j^{-\epsilon})^d-j^d\bigr] \right)^{\frac{1}{p}}\\
&\ge C (K+K^{-\epsilon})^{\frac{d}{q}-\frac{d}{p}} (K+K^{-\epsilon})^{\frac{d}{p }-\frac{\epsilon}{p}}\\
&= C(K+K^{-\epsilon})^{\frac{d}{q}-\frac{\epsilon}{p}}.
\end{align*}
Meanwhile, for each $i = 1, \dots, m$, we claim that
\[
\sup\limits_{a\in\R^d, r>0} |B(a,r)|^{\frac{1}{q_i} - \frac{1}{p_i}}
\Bigl({\displaystyle \int_{B(a,r)} |f_i(x)|^{p_i} \,dx}\Bigr)^{\frac{1}{p_i}}
=|B(0,L)|^{\frac{1}{q_i} - \frac{1}{p_i}}\Bigl({\displaystyle \int_{B(0,L)}
|f_i(x)|^{p_i} \,dx}\Bigr)^{\frac{1}{p_i}}
\]
for $2< L\leq K+K^{-\epsilon}$. To see this, note that $f_i=g_{\epsilon,K}$ is symmetrical about $0$
and has most mass around $0$, and so for each $a\in\R^d$ and $r>0$, we have
\[
|B(a,r)|^{\frac{1}{q_i} - \frac{1}{p_i}}
\Bigl({\displaystyle \int_{B(a,r)} |f_i(x)|^{p_i} \,dx}\Bigr)^{\frac{1}{p_i}} \le |B(0,r)|^{\frac{1}{q_i}
-\frac{1}{p_i}}\Bigl({\displaystyle \int_{B(0,r)} |f_i(x)|^{p_i} \,dx}\Bigr)^{\frac{1}{p_i}}.
\]
Now, as a function of $r$ only, the value of the last expression on the right hand side gets larger and
larger as $r$ grows from $0$ to $2+2^{-\epsilon}$ but decreases for $r>K+K^{-\epsilon}$. This verifies
our claim about the supremum. 

With such a value of $L$, let $L_1 :=\lfloor L \rfloor$ and $L_2:= \lceil L \rceil$. Clearly
$L_1\ge\frac{1}{2}L_2$. Since $\frac{1}{q_i} - \frac{1}{p_i} \le 0$ for $i=1, \dots, m$, we have
\begin{align*}
\|f_i \|_{\mathcal{M}^{p_i }_{q_i }} &= \sup_{a\in\R^d,r>0 }\ |B(a,r)|^{\frac{1}{q_i} - \frac{1}{p_i}}
\left( \int_{B(a,r)} |f_i(x)|^{p_i}\ dx\right)^{\frac{1}{p_i}}\\
&= |B(0,L)|^{\frac{1}{q_i} - \frac{1}{p_i}} \left( \int_{B(0,L)} |f_i(x)|^{p_i}\ dx\right)^{\frac{1}{p_i}}\\
&\leq C\,L_1^{\frac{d}{q_i}-\frac{d}{p_i}} \Biggl(|B(0,1)|+\sum_{j=1}^{L_2} \bigl[(j+j^{-\epsilon})^d-j^d\bigr]
\Biggr)^{\frac{1}{p_i}}\\
&\leq C\,L_1^{\frac{d}{q_i}-\frac{d}{p_i}} \Biggl(|B(0,1)|+\sum_{j=1}^{L_2} j^{d-1-\epsilon}\Biggr)^{\frac{1}{p_i}}\\
&\leq C\,L_1^{\frac{d}{q_i}-\frac{d}{p_i}} L_2^{\frac{d}{p_i}-\frac{\epsilon}{p_i}}\\
&\leq C\,L_2^{\frac{d}{q_i}-\frac{\epsilon}{p_i}}.
\end{align*}
Moreover, since $L_2 \le K+1 \le 2(K+K^{-\epsilon})$, we obtain
\[
\| f_i \|_{\mathcal{M}^{p_i}_{q_i}} \le C (K+K^{-\epsilon})^{\frac{d}{q_i}-\frac{\epsilon}{p_i}}
\]
for $i=1,\dots,m$.

Knowing that $\sum_{i=1}^m \frac{d}{q_i} = \frac{d}{q} $ and $\left\| \prod\limits_{i=1}^m
f_i \right\|_{\mathcal{M}^p_q} \le \prod\limits_{i=1}^m \| f_i \|_{\mathcal{M}^{p_i}_{q_i}}$,
we conclude from the two inequalities above that
\[
(K+K^{-\epsilon})^{-\frac{\epsilon}{p} + \sum_{i=1}^m \frac{\epsilon}{p_i}}\le C
\]
for every $K\in \mathbb{N}$. Therefore $\sum_{i=1}^m \frac{1}{p_i} \le \frac{1}{p}$, as desired.
\end{proof}

\noindent{\tt Remark}. For $m=2$, we obtain the proof of Theorem \ref{theorem:1.1}.

\subsection{The proof of Theorem \ref{theorem:2.7}}

\begin{proof}
If (1) holds, then by  similar arguments as in \cite{Indra} we can prove that (1) implies (2).
It thus remains to prove that (2) implies (1). To do so, take an arbitrary $R>0$ and let
$f_i := \chi_{B(0,R)}$ for $i=1,\dots,m$. By the hypothesis, we then have
$$
\| \chi_{B(0,R)} \|_{w\mathcal{M}^{p}_{q}} = \left\| \prod\limits_{i=1}^m f_i \right\|_{w\mathcal{M}^p_q}
\le m \prod\limits_{i=1}^m \|f_i\|_{w\mathcal{M}^{p_i}_{q_i}} = m \prod\limits_{i=1}^m
\| \chi_{B(0,R)} \|_{w\mathcal{M}^{p_i}_{q_i}}.
$$
Hence $R^{\frac{d}{q }-\sum_{i=1}^m \frac{d}{q_i}}\le C$.
Since this holds for every $R > 0$, it follows that $\sum_{i=1}^m \frac{1}{q_i} = \frac{1}{q}.$

Next, let $0 < \epsilon < \min \{\frac{dp_1}{q_1},\dots,\frac{dp_m}{q_m}\}$
and, for arbitrary $K\in \N$, define $g_{\epsilon,K}(x) := \chi_{\{0 \le |x| < 1\}}(x)+
\sum_{j=1}^K \chi_{\{j \le |x| \le j+j^{-\epsilon}\}}(x)$.
For $i=1,\dots,m$, let $f_i := g_{\epsilon,K}$. We observe that
\begin{align*}
\left\| \prod\limits_{i=1}^m f_i \right\|_{w\mathcal{M}^{p}_{q}}
&\ge \frac{1}{2}|B(0,K+K^{-\epsilon})|^{\frac{1}{q }-\frac{1}{p}}
\,\left|\bigl\{x \in B(a,r)\,:\,|f_i(x)| > \frac{1}{2} \bigr\}\right|^{\frac{1}{p}}
\nonumber\\
&\ge C(K+K^{-\epsilon})^{\frac{d}{q}-\frac{d}{p}}(K+K^{-\epsilon})^{\frac{d}{p}-\frac{\epsilon}{p}}
= C(K+K^{-\epsilon})^{\frac{d}{q}-\frac{\epsilon}{p}}.
\end{align*}
Meanwhile, by Lemma \ref{lemma:1.2} and the Morrey-norm estimate for $f_i$, we obtain
$
\| f_i \|_{w\mathcal{M}^{p_i}_{q_i}}\le \|f_i\|_{\mathcal{M}^{p_i}_{q_i}} \le
C(K+K^{-\epsilon})^{\frac{d}{q_i}-\frac{\epsilon}{p_i}}
$
for $i = 1, \dots, m$. Since $\sum_{i=1}^m \frac{1}{q_i}=\frac{1}{q}$ and $\left\| \prod\limits_{i=1}^m
f_i \right\|_{w\mathcal{M}^p_q} \le m \prod\limits_{i=1}^m \| f_i \|_{w\mathcal{M}^{p_i}_{q_i}}$, we have
\[
(K+K^{-\epsilon})^{-\frac{\epsilon}{p}+\sum_{i=1}^m \frac{\epsilon}{p_i}}\le C.
\]
Since it holds for every $K \in \mathbb{N}$, we must have $\sum_{i=1}^m \frac{1}{p_i}\leq \frac{1}{p}$.
\end{proof}

\subsection{The proof of Theorem \ref{theorem:3.1}}

\begin{proof}
(1) Suppose that $\prod\limits_{i=1}^m \phi_i(r) \le \phi(r)$ for every $r>0$. Take an arbitrary
$B:=B(a,R) \subseteq \mathbb{R}^d$ and $f_i \in \mathcal{M}^{p_i}_{\phi_i}(\mathbb{R}^d)$,
where $i=1,\dots,m$. Putting $\frac{1}{p^*} := \sum_{i=1}^m \frac{1}{p_i}$, it follows from
the generalized H\"{o}lder's inequality in Lebesgue spaces that
\begin{align*}
\frac{1}{\phi(R)}\left(\frac{1}{|B|} \int\limits_{B}\prod\limits_{i=1}^m |f_i(x)|^{p}dx \right)^{\frac{1}{p}} &\le
\frac{1}{\phi(R)}\left(\frac{1}{|B|} \int\limits_{B}\prod\limits_{i=1}^m |f_i(x)|^{p^*}dx \right)^{\frac{1}{p^*}}\\
&\le \prod\limits_{i=1}^m \frac{1}{\phi_i(R)} \left(\frac{1}{|B|} \int\limits_{B}|f_i(x)|^{p_i}dx \right)^{\frac{1}{p_i}}.
\end{align*}
We can now take the supremum over $B$ to obtain $\left\| \prod\limits_{i=1}^m f_i \right\|_{\mathcal{M}^{p}_{\phi}}
\le \prod\limits_{i=1}^m \|f_i\|_{\mathcal{M}^{p_i}_{\phi_i}}$.
	
\bigskip

(2) Suppose that $\left\| \prod\limits_{i=1}^m f_i \right\|_{\mathcal{M}^{p}_{\phi}}
\le \prod\limits_{i=1}^m \| f_i \|_{\mathcal{M}^{p_i}_{\phi_i}}$ for every
$f_i \in \mathcal{M}^{p_i}_{\phi_i}(\mathbb{R}^d)$, where $i=1,\dots,m$.
Take an arbitrary $R>0$ and define $f_i := \chi_{B(0,R)}$ for $i=1,\dots,m$.
Then there exists $C>0$ (independent of $R$) such that
$$
\frac{1}{\phi(R)} \le \| \chi_{B(0,R)} \|_{\mathcal{M}^{p}_{\phi}} \le \prod\limits_{i=1}^m
\| \chi_{B(0,R)} \|_{\mathcal{M}^{p_i}_{\phi_i}} \le \prod\limits_{i=1}^m \frac{C}{\phi_i(R)}.
$$
Thus $\prod\limits_{i=1}^m \phi_i(R) \le C\,\phi(R)$, as desired.
\end{proof}

\subsection{The proof of Theorem \ref{theorem:3.1a}}

\begin{proof}
	$(1) \Rightarrow (2)$ Suppose that $\sum_{i=1}^m \frac{1}{p_i} \leq \frac{1}{p }$.
    As before, one may easily observe that
	$\left\| \prod\limits_{i=1}^m f_i \right\|_{\mathcal{M}^{p}_{\phi}}
	\leq \prod\limits_{i=1}^m \|f_i\|_{\mathcal{M}^{p_i}_{\phi_i}}$ for every $f_i \in
    \mathcal{M}^{p_i}_{\phi_i},\ i=1,\dots,m$.
	
	\bigskip
	
	$(2) \Rightarrow (1)$ Let $\left\| \prod\limits_{i=1}^m f_i \right\|_{\mathcal{M}^{p}_{\phi}}
    \le \prod\limits_{i=1}^m \| f_i\|_{\mathcal{M}^{p_i}_{\phi_i}}$ for every
	$f_i \in \mathcal{M}^{p_i}_{\phi_i}(\mathbb{R}^d)$, where $i=1,\dots,m$.
	For arbitrary $K\in \N$, we define $g_{\epsilon,K}(x):=\chi_{\{ 0\le |x|<1 \}}(x)+
    \sum_{j=1}^{K} \chi_{ \{ j\le |x|\le j+j^{-\epsilon} \}}(x)$. For $i=1,\dots,m$, let
	$f_i:=g_{\epsilon,K}$. It is easy to check that
	\begin{align*}
	\left\| \prod\limits_{i=1}^m f_i \right\|_{\mathcal{M}^{p}_{\phi}}
	&\ge \frac{1}{\phi\left( K+K^{-\epsilon}\right)}\Biggl( \frac{1}{|B(0,K+K^{-\epsilon})|}
    \int\limits_{B(0,K+ K^{-\epsilon})} g_{\epsilon,K}(x)\ dx\Biggr)^{\frac{1}{p}}\\
	\nonumber
	&\ge \frac{C}{(K+K^{-\epsilon})^{\frac{\epsilon}{p}}\phi(K+K^{-\epsilon})}.
	\end{align*}
	Meanwhile, for $i = 1,\dots, m$, by using similar arguments as in the proof of Theorem
    \ref{theorem:2.3} one may observe that for $2<L\le K+K^{-\epsilon}$,
    \[
	\| f_i \|_{\mathcal{M}^{p_i }_{\phi_i}} \le \frac{1}{\phi_i(L)}
	\Biggl( \frac{1}{|B(0,L)|} \int\limits_{B(0,L)} g_{\epsilon,K}(x)\ dx \Biggr)^{\frac{1}{p_i}}
	\le \frac{C}{(K+K^{-\epsilon})^{\frac{\epsilon}{p_i}}\phi_i(K+K^{-\epsilon})}.
    \]
Because $\left\| \prod\limits_{i=1}^m f_i \right\|_{\mathcal{M}^p_\phi} \le \prod\limits_{i=1}^m
\| f_i \|_{\mathcal{M}^{p_i}_{\phi_i}}$ and $\prod\limits_{i=1}^m \phi_i(r) =\phi(r)$ for every
$r>0$, we get
\[
(K+K^{-\epsilon})^{-\frac{\epsilon}{p} + \sum_{i=1}^m \frac{\epsilon}{p_i}} \le C,
\]
which holds for arbitrary $K \in \mathbb{N}$. Consequently, $\sum_{i=1}^m \frac{1}{p_i} \le \frac{1}{p}$,
as desired.
\end{proof}

\subsection{The proof of Theorem \ref{theorem:3.4}}

\begin{proof}
(1) Suppose that $\prod\limits_{i=1}^m \phi_i(r) \le \phi(r)$ for every $r>0$.
Let $f_i \in w\mathcal{M}^{p_i}_{\phi_i}(\mathbb{R}^d),$ where $i=1,\dots,m$.
For an arbitrary $B:=B(a,R) \subseteq \mathbb{R}^d$ and $\gamma > 0$, let
\[
A(B,\gamma) := \left[ \frac{\gamma^p\left|  \left\{x \in B\,:\,
\prod\limits_{i=1}^m \bigl|\frac{f_i(x)}{\| f_i \|_{w\mathcal{M}^{p_i}_{\phi_i}}}\bigr|> \gamma \right\}
\right| }{\phi^{p}(R)|B|} \right]^{\frac{1}{p}}.
\]
Putting $\frac{1}{p^*}:=\sum_{i=1}^m \frac{1}{p_i}$, we observe that
\begin{align*}
A(B,\gamma) & \le \left[ \frac{\gamma^{p^*}\left| \left\{x \in B\,:\,\prod\limits_{i=1}^m
    \bigl| \frac{f_i(x)}{\| f_i \|_{w\mathcal{M}^{p_i}_{\phi_i}}} \bigr|> \gamma \right\} \right|}
    {|B|\bigl(\prod\limits_{i=1}^m\phi_i(R)\bigr)^{p^*}}\right]^{\frac{1}{p^*}}
    & = \left[ \frac{\gamma_0^{p^*}\left| \left\{x \in B\,:\,\prod\limits_{i=1}^m \bigl|\frac{f_i(x)}{\phi_i(R)
    \| f_i \|_{w\mathcal{M}^{p_i}_{\phi_i}}}\bigr|> \gamma_0 \right\} \right|}{|B|} \right]^{\frac{1}{p^*}},
\end{align*}
where $\gamma_0 := \frac{\gamma}{\prod\limits_{i=1}^m\phi_i(R)}$.
Furthermore, by using Young's inequality for products, we have
\begin{align*}
A(B,\gamma) & \le \left[ \frac{\gamma_0^{p^*} \left| \Bigl\{x\in B\,:\,\prod\limits_{i=1}^m \bigl|\frac{f_i(x)}
    {\phi_i(R)\| f_i \|_{w\mathcal{M}^{p_i}_{\phi_i}}}\bigr|> \gamma_0 \Bigr\} \right|}{|B|} \right]^{\frac{1}{p^*}}\\
    & \le \left[ \frac{\gamma_0^{p^*}\left| \left\{x \in B\,:\,\sum\limits_{i=1}^{m} \frac{p^*}{p_i}
    \bigl|\frac{f_i(x)}{\phi_i(R)\| f_i \|_{w\mathcal{M}^{p_i}_{\phi_i}}}\bigr|^{\frac{p_i}{p^*}} > \gamma_0
    \right\} \right|}{|B|}\right]^{\frac{1}{p^*}}\\
    & \le \left[ \sum_{i=1}^{m} \frac{\gamma_0^{p^*}\left| \left\{x \in B\,:\,\frac{p^*}{p_i} \bigl|\frac{f_i(x)}
    {\phi_i(R)\| f_i \|_{w\mathcal{M}^{p_i}_{\phi_i}}}\bigr|^{\frac{p_i}{p^*}} > \frac{\gamma_0}{m} \right\}
    \right|}{|B|} \right]^{\frac{1}{p^*}}.
\end{align*}
Notice that $\frac{p^*}{p_i}\Bigl| \frac{f_i(x)}{\phi_i(R)\|f_i\|_{w\mathcal{M}^{p_i}_{\phi_i}}}
\Bigr|^{\frac{p_i}{p^*}}>\frac{\gamma_0}{m}$ is equivalent to $|f_i(x)|>\bigl( \frac{\gamma_0 p_i}{mp^*} \bigr)^{p^*/p_i}
\phi_i(R)\|f_i\|_{w\mathcal{M}^{p_i}_{\phi_i}} =: \gamma_i$.
Hence we obtain
\begin{align*}
A(B,\gamma) & \le \left[ \sum_{i=1}^{m} \Bigl( \frac{\gamma_i(mp^*)^{p^*/p_i}}{\phi_i(R)
        p_i^{p^*/p_i}\| f_i \|_{w\mathcal{M}^{p_i}_{\phi_i}}} \Bigr)^{p_i}\, \frac{\left|
        \left\{ x \in B\,:\,|f_i(x)| > \gamma_i \right\} \right|}{|B|} \right]^{\frac{1}{p^*}}\\
        &= m \left[ \sum_{i=1}^m \Bigl(\frac{p^*}{p_i}\Bigr)^{p^*} \frac{\gamma_i^{p_i}|\{x\in B\,:\,|f_i(x)|>
        \gamma_i\}|}{\phi_i(R)^{p_i}|B|\|f_i\|_{w\mathcal{M}^{p_i}_{\phi_i}}^{p_i}} \right]^{\frac{1}{p^*}}\\
        &\le m \left[ \sum_{i=1}^m \Bigl(\frac{p^*}{p_i}\Bigr)^{p^*} \right]^{\frac{1}{p^*}}
        \le m \left[ \sum_{i=1}^m \frac{p^*}{p_i} \right]^{\frac{1}{p^*}}= m,
\end{align*}
because $1\le p^*\le p_i$ for $i = 1, \dots, m$.
We then take the supremum of $A(B,\gamma)$ over $B=B(a,R)$ and $\gamma > 0$ to obtain
$\left\| \prod\limits_{i=1}^{m} f_i \right\|_{w\mathcal{M}^{p}_{\phi}}
\leq m \prod\limits_{i=1}^{m} \| f_i \|_{w\mathcal{M}^{p_i}_{\phi_i}}$.

\bigskip

(2) Let $\left\| \prod\limits_{i=1}^m f_i \right\|_{w\mathcal{M}^{p}_{\phi}} \le
m \prod\limits_{i=1}^m \left\Vert f_i\right\Vert_{w\mathcal{M}^{p_i}_{\phi_i}}$
for every $f_i \in w\mathcal{M}^{p_i}_{\phi_i}(\mathbb{R}^d)$, $i=1,\dots,m$.
Take an arbitrary $R>0$ and define $f_i:=\chi_{B(0,R)}$ for $i=1,\dots,m$. By the hypothesis,
we have
\[
\| \chi_{B(0,R)} \|_{w\mathcal{M}^{p}_{\phi}} \le m \prod\limits_{i=1}^m
\| \chi_{B(0,R)} \|_{w\mathcal{M}^{p_i}_{\phi_i}}.
\]
It thus follows from Lemma \ref{lemma:3.1} that there exists $C>0$ (independent of $R$)
such that $\prod\limits_{i=1}^m \phi_i(R) \le C\,\phi(R)$.
\end{proof}

\subsection{The proof of Theorem \ref{theorem:3.4a}}

\begin{proof}
	$(1) \Rightarrow (2)$ Suppose that $\sum_{i=1}^m \frac{1}{p_i} \le \frac{1}{p}$.
    As before, we obtain $\left\| \prod\limits_{i=1}^{m} f_i \right\|_{w\mathcal{M}^{p}_{\phi}}
	\le m \prod\limits_{i=1}^{m} \| f_i \|_{w\mathcal{M}^{p_i}_{\phi_i}}$ for every
    $f_i \in w\mathcal{M}^{p_i}_{\phi_i},\ i=1,\dots,m$.
	
	\bigskip
	
	$(2) \Rightarrow (1)$ Suppose that $\left\| \prod\limits_{i=1}^m f_i \right\|_{w\mathcal{M}^{p}_{\phi}}
    \le m \prod\limits_{i=1}^m \| f_i \|_{w\mathcal{M}^{p_i}_{\phi_i}}$ for every $f_i \in
	w\mathcal{M}^{p_i}_{\phi_i}(\mathbb{R}^d)$, where $i=1,\dots,m$.
	For arbitrary $K\in\N$, define $g_{\epsilon,K}(x):=\chi_{\{ 0\le |x|<1 \}}(x)+\sum_{j=1}^K
    \chi_{ \{ j\le |x|\le j+j^{-\epsilon} \}}(x)$, and for $i=1,\dots,m$ put
	$f_i:=g_{\epsilon,K}$.
    By using the same arguments as in the proof of Theorem \ref{theorem:2.7}, we have
    \[
    \left\|\prod\limits_{i=1}^m f_i\right\|_{w\mathcal{M}^{p}_{\phi}}
    \ge
    \frac{C}{(K+K^{-\epsilon})^{\frac{\epsilon}{p }}\phi(K+K^{-\epsilon})}.
    \]
    Next, using Lemma \ref{lemma:1-3} and the generalized Morrey-norm estimate for $f_i$,
    we have
    \[
    \|f_i \|_{w\mathcal{M}^{p_i }_{\phi_i }} \le \| f_i \|_{\mathcal{M}^{p_i }_{\phi_i }}
	\le \frac{C}{(K+K^{-\epsilon})^{\frac{\epsilon}{p_i }}\phi_i(K+K^{-\epsilon})},
    \]
    for $i = 1, \dots, m.$
	Since $\left\| \prod\limits_{i=1}^m f_i \right\|_{w\mathcal{M}^p_\phi} \le m \prod\limits_{i=1}^m
	\| f_i \|_{w\mathcal{M}^{p_i}_{\phi_i}}$ and $\prod\limits_{i=1}^m \phi_i(r) = \phi(r)$
    for every $r>0$, it follows that
	\[
	(K+K^{-\epsilon})^{-\frac{\epsilon}{p }+\sum_{i=1}^m \frac{\epsilon}{p_i}} \le C.
	\]
	We therefore conclude that $\sum_{i=1}^m \frac{1}{p_i}\leq \frac{1}{p}$.
\end{proof}

\section{Concluding Remarks}
We have shown sufficient and necessary conditions for generalized H\"{o}lder's inequality in
several spaces, namely Morrey spaces and their weak type versions.
From Theorems \ref{theorem:2.3} and \ref{theorem:2.7}, we see that both H\"older's inequality
in Morrey spaces and in weak Morrey spaces are equivalent to the same condition, namely
$\sum_{i=1}^m \frac{1}{p_i} \le \frac{1}{p}$ and $\sum_{i=1}^m \frac{1}{q_i} = \frac{1}{q}$.
Accordingly, we have the following corollary.

\bigskip

\begin{corollary}
For $m \ge 2$, the following statements are equivalent:

{\parindent=0cm
{\rm (1)} $\left\| \prod\limits_{i=1}^{m} f_{i} \right\|_{\mathcal{M}^p_q} \le
\prod\limits_{i=1}^{m} \|f_{i}\|_{\mathcal{M}^{p_i}_{q_i}}$ for every
$f_i \in \mathcal{M}^{p_i}_{q_i} (\R^d)$, where $i = 1,\dots,m$.
	
{\rm (2)} $\left\| \prod\limits_{i=1}^{m} f_{i} \right\|_{w\mathcal{M}^{p}_{q}}
\le m \prod\limits_{i=1}^{m} \|f_{i} \|_{w\mathcal{M}^{p_i}_{q_i}}$ for every
$f_i \in w\mathcal{M}^{p_i}_{q_i} (\R^d)$, where $i = 1,\dots,m$.
\par}
\end{corollary}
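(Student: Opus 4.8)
The plan is to route both statements through a common arithmetic condition on the indices and then invoke transitivity of logical equivalence; all the analytic work has already been carried out in Theorems \ref{theorem:2.3} and \ref{theorem:2.7}. Throughout I assume the standing hypotheses $1 \le p \le q < \infty$ and $1 \le p_i \le q_i < \infty$ (for $i = 1, \dots, m$) of those theorems, and I abbreviate by $(\star)$ the condition
\[
\sum_{i=1}^m \frac{1}{p_i} \le \frac{1}{p} \quad \text{and} \quad \sum_{i=1}^m \frac{1}{q_i} = \frac{1}{q}.
\]

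First I would apply Theorem \ref{theorem:2.3}, whose two equivalent statements are precisely $(\star)$ and statement (1) of the present corollary. This yields the equivalence: statement (1) holds if and only if $(\star)$ holds. Next I would apply Theorem \ref{theorem:2.7} in exactly the same manner; its two equivalent statements are again $(\star)$ and statement (2) of the corollary, so statement (2) holds if and only if $(\star)$ holds.

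Finally, combining these two equivalences through the shared middle condition gives (1) $\iff$ $(\star)$ $\iff$ (2), whence statements (1) and (2) are equivalent. The only point that requires attention --- and it is essentially the whole content of the argument, modest as it is --- is the observation that the index condition appearing as statement (1) of Theorem \ref{theorem:2.3} is \emph{verbatim} the one appearing as statement (1) of Theorem \ref{theorem:2.7}. Since the two theorems characterize their respective H\"older inequalities by the \emph{same} condition on the exponents, no new estimate is needed and the corollary follows immediately by transitivity; in effect there is no analytic obstacle to overcome here, as the characteristic-function estimates of Lemma \ref{lemma:3.1} and the generalized H\"older inequality in Lebesgue spaces have already done the work in the proofs of Theorems \ref{theorem:2.3} and \ref{theorem:2.7}.
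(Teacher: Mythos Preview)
Your proposal is correct and matches the paper's own reasoning essentially verbatim: the paper simply observes in its Concluding Remarks that Theorems~\ref{theorem:2.3} and~\ref{theorem:2.7} characterize their respective H\"older inequalities by the \emph{same} index condition $\sum_{i=1}^m \frac{1}{p_i} \le \frac{1}{p}$ and $\sum_{i=1}^m \frac{1}{q_i} = \frac{1}{q}$, and states the corollary as an immediate consequence. There is no additional argument in the paper beyond this transitivity, so your write-up is already more detailed than what the authors provide.
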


\medskip

Similarly, from Theorems 2.4 and 2.6, we have the following corollary about H\"older's
inequality in generalized Morrey spaces and in generalized weak Morrey spaces.

\bigskip

\begin{corollary}
Let $m \ge 2$ and $1 \le p, p_i < \infty $ for $i=1,\dots,m$.
If $\phi \in\mathcal{G}_p$ and $\phi_i  \in \mathcal{G}_{p_i}$ such that 	
$\prod\limits_{i=1}^m \phi_i(r) = \phi(r)$ for every $r>0$ and there exists
$\epsilon>0$ such that $r^{\frac{\epsilon}{p_i}}\phi_i(r)$ are almost
decreasing for $i=1,\dots,m$, then the following statements are equivalent:

	{\parindent=0cm
	{\rm (1)} $\left\| \prod\limits_{i=1}^m f_i \right\|_{\mathcal{M}^p_\phi}
	\leq \prod\limits_{i=1}^m \| f_i \|_{\mathcal{M}^{p_i}_{\phi_i}}$
	for every $f_i \in \mathcal{M}_{\phi_i}^{p_i}(\mathbb{R}^d)$, where $i=1,\dots,m$.
		
	{\rm (2)} $\left\| \prod\limits_{i=1}^m f_i \right\|_{w\mathcal{M}^p_\phi}
	\leq m \prod\limits_{i=1}^m \| f_i \|_{w\mathcal{M}^{p_i}_{\phi_i}}$
	for every $f_i \in w\mathcal{M}_{\phi_i}^{p_i}(\mathbb{R}^d)$, where $i=1,\dots,m$.
	\par}
\end{corollary}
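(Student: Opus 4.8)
The plan is to recognize that both assertions in this corollary have already been characterized, separately, in terms of a single arithmetic condition on the exponents, and then simply to chain the two characterizations. Specifically, I would introduce the auxiliary condition
\[
(\star)\qquad \sum_{i=1}^m \frac{1}{p_i} \le \frac{1}{p},
\]
and argue that each of (1) and (2) is equivalent to $(\star)$.

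First I would check that the standing hypotheses of this corollary---namely $m \ge 2$, $1 \le p, p_i < \infty$, $\phi \in \mathcal{G}_p$, $\phi_i \in \mathcal{G}_{p_i}$, the factorization $\prod_{i=1}^m \phi_i(r) = \phi(r)$ for every $r>0$, and the existence of $\epsilon > 0$ for which each $r^{\epsilon/p_i}\phi_i(r)$ is almost decreasing---coincide exactly with the hypotheses imposed in Theorem \ref{theorem:3.1a} and in Theorem \ref{theorem:3.4a}. This is immediate upon comparing the statements, so both theorems are directly applicable in the present setting without any additional work.

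Second, I would invoke Theorem \ref{theorem:3.1a}, whose equivalence (1)$\Leftrightarrow$(2) asserts precisely that statement (1) of the present corollary---the generalized H\"{o}lder inequality in $\mathcal{M}^p_\phi$---holds if and only if $(\star)$ holds. In the same way, I would invoke Theorem \ref{theorem:3.4a}, whose equivalence asserts that statement (2) of the present corollary---the generalized H\"{o}lder inequality in $w\mathcal{M}^p_\phi$ with the constant $m$---holds if and only if $(\star)$ holds.

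Finally, combining the two equivalences yields the desired conclusion: statement (1) holds if and only if $(\star)$ holds, and $(\star)$ holds if and only if statement (2) holds, whence (1) and (2) are equivalent to each other. There is no genuine obstacle here, as the corollary is a purely formal consequence of the two preceding theorems through their shared characterization by $(\star)$; the only point requiring care is the routine verification that the hypotheses of both theorems are met, which I have isolated as the first step above.
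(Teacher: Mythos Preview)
Your proposal is correct and matches the paper's own approach exactly: the paper presents this corollary as an immediate formal consequence of Theorems \ref{theorem:3.1a} and \ref{theorem:3.4a}, each of which characterizes the corresponding H\"older inequality by the common condition $\sum_{i=1}^m \frac{1}{p_i} \le \frac{1}{p}$, and your chaining of these two equivalences is precisely what is intended.
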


\medskip

\textbf{Acknowledgement}. The research is supported by ITB Research \& Innovation Program 2017.
The authors thank the referee for his/her useful remarks on the earlier version of this paper.

\medskip


\begin{thebibliography}{10}
\bibitem{Avram}
F. Avram and L. Brown, ``A generalized H\"{o}lder's inequality and a generalized Szego theorem'',
\emph{Proc. Amer. Math. Soc.} \textbf{107}-3 (1989), 687--695.

\bibitem{Cheung}
W.S. Cheung, ``Generalizations of H\"{o}lder's inequality'', \emph{Int. J. Math. Math. Sci.}
\textbf{26}-1 (2001), 7--10.

\bibitem{Eridani}
Eridani; H. Gunawan; E. Nakai; and Y. Sawano, ``Characterizations for the generalized fractional
integral operators on Morrey spaces'', \emph{Math. Ineq. Appl.} {\bf 17} (2014), 761--767.

\bibitem{EGU12}
Eridani; H. Gunawan; and M.I. Utoyo, ``A characterization for fractional integral operators on
generalized Morrey spaces'', \emph{Anal. Theory Appl.} \textbf{28}-3 (2012), 263--267.

\bibitem{Gunawan}
H. Gunawan; D.I. Hakim; K. Limanta; and A.A. Masta, ``Inclusion properties of generalized Morrey
spaces'', \emph{Math. Nachr.} \textbf{290} (2017), 332--340.

\bibitem{Masta2}
A.A. Masta; H. Gunawan; and W. Setya-Budhi, ``An inclusion property of Orlicz-Morrey spaces'',
\emph{J. Phys.: Conf. Ser.} \textbf{893} (2017), 1--8.

\bibitem{Matkowski}
J. Matkowski, ``The converse of the H\"{o}lder's inequality and its generalizations'',
\emph{Stud. Math.} \textbf{109}-2 (1994), 171--182.

\bibitem{Indra}
I. Rukmana, \textit{Multiplication Operators on Morrey Spaces and Weak Morrey Spaces}
(in Indonesian), Master Thesis, Institut Teknologi Bandung, Bandung, 2016.

\bibitem{SST}
Y. Sawano; S. Sugano; and H. Tanaka, ``Generalized fractional integral operators and
fractional maximal operators in the framework of Morrey spaces'', \emph{Trans. Amer. Math. Soc.}
\textbf{363}-12 (2011), 6481--6503.

\bibitem{Sugano}
S. Sugano, ``Some inequalities for generalized fractional integral operators on generalized
Morrey spaces'', \emph{Math. Ineq. Appl.} \textbf{14}-4 (2011), 849--865.

\bibitem{Sugano1}
S. Sugano, ``Some inequalities for generalized fractional integral operators on generalized
Morrey spaces and their remarks'', \emph{Sci. Math. Japon.} \textbf{76}-3 (2013), 471--495.

\bibitem{Vasyunin}
V. Vasyunin, ``The sharp constant in the reverse H\"{o}lder's inequality for Muckenhoupt weights'',
\emph{St. Petersburg Math. J.} \textbf{15}-1 (2004), 49--79.

\end{thebibliography}
\end{document}